\documentclass[a4paper,12pt]{article}

\usepackage{mathtools}
\usepackage[all]{xy}
\usepackage{float}
\usepackage{amsmath,amssymb,amsfonts,amsthm}
\usepackage{enumerate}
\usepackage{epstopdf}
\usepackage{color,fancybox}
\usepackage{url}
\usepackage[backref]{hyperref}
\usepackage{graphicx,psfrag,epsfig}
\usepackage{bm}
\usepackage{tikz}
\usetikzlibrary{positioning,arrows,calc,decorations.text,scopes,intersections,decorations.markings}





\newtheorem{defi}{Definition}[section]
\newtheorem{lem}{Lemma}[section]

\newcommand{\un}{{\mathbf{1}}}


\definecolor{darkpastelgreen}{rgb}{0.01, 0.75, 0.24}


\renewcommand{\leq}{\leqslant}

\renewcommand{\geq}{\geqslant}



\newcommand{\bq}{\mathbf{q}}

\newcommand{\N}{\mathbb{N}}

\newcommand{\R}{\mathbb{R}}

\renewcommand{\P}{\mathbb{P}}

\newcommand{\dN}{\mathbb{N}}
\newcommand{\dP}{\mathbb{P}}
\newcommand{\dR}{\mathbb{R}}

\newcommand{\calC}{\mathcal{C}}
\newcommand{\calD}{\mathcal{D}}

\newcommand{\calI}{\mathcal{I}}

\newcommand{\calL}{\mathcal{L}}

\newcommand{\calN}{\mathcal{N}}
\newcommand{\calO}{\mathcal{O}}

\newcommand{\calS}{\mathcal{S}}
\newcommand{\calT}{\mathcal{T}}
\newcommand{\calU}{\mathcal{U}}





\newcommand{\BRA}[1]{{{\left\{#1\right\}}}} 
\newcommand{\NRM}[1]{{{\left\| #1\right\|}}} 
\newcommand{\PAR}[1]{{{\left(#1\right)}}} 
\newcommand{\SBRA}[1]{{{\left[#1\right]}}} 
\newcommand{\ABS}[1]{{{\left| #1 \right|}}} 

 
 

\theoremstyle{plain}
\newtheorem{The}{Theorem}[section]
\newtheorem{Lem}[The]{Lemma}
\newtheorem{Pro}[The]{Proposition}

\newtheorem{Ass}{Assumption}

\newtheorem{Def}[The]{Definition}

\numberwithin{equation}{section}

\theoremstyle{definition}
\newtheorem{Rem}[The]{Remark}

\DeclareMathOperator*{\argmin}{argmin}

\setlength{\parindent}{0cm}
\numberwithin{equation}{section}

\begin{document}

\begin{center}
  {\sc \Large Recursive Estimation of a Failure Probability for a Lipschitz Function}
  \vspace{1cm}
\end{center}

{\bf Lucie Bernard}\\
{\it IDP, Universit\'e de Tours, France }\\
\textsf{lucie.bernard@live.fr}
\bigskip

{\bf Albert Cohen}\\
{\it LJLL, Sorbonne Universit\'e, France }\\
\textsf{cohen@ann.jussieu.fr}
\bigskip

{\bf Arnaud Guyader\footnote{Corresponding author.}}\\
{\it LPSM, Sorbonne Universit\'e \& CERMICS, France }\\
\textsf{arnaud.guyader@upmc.fr}
\bigskip

{\bf Florent Malrieu}\\
{\it IDP, Universit\'e de Tours, France }\\
\textsf{florent.malrieu@univ-tours.fr}
\bigskip

\medskip

\begin{abstract} Let $g:\Omega=[0,1]^d\to\R$ denote a Lipschitz function that can be evaluated at each point, but at the price of a heavy computational time. Let $X$ stand for a random variable with values in $\Omega$ such that one is able to simulate, at least approximately, according to the restriction of the law of $X$ to any subset of $\Omega$. For example, thanks to Markov chain Monte Carlo techniques, this is always possible when $X$ admits a density that is known up to a normalizing constant. In this context, given a deterministic threshold $T$ such that the failure probability $p:=\P(g(X)>T)$ may be very low, our goal is to estimate the latter with a minimal number of calls to $g$. In this aim, building on Cohen \textit{et al.}\ \cite{MR3197124}, we propose a recursive and optimal algorithm that selects on the fly areas of interest and estimate their respective probabilities. \medskip

  \noindent \emph{Index Terms}: Sequential design, Probability of failure, Sequential Monte Carlo, Tree based algorithms, High dimension. \medskip

  \noindent \emph{AMS Subject Classification}: 60J20, 65C05, 65C05, 68Q25, 68W20.

\end{abstract}

\tableofcontents


\section{Introduction}\label{intro}

Let $g:\Omega=[0,1]^d\to\R$ denote a function that can be evaluated
at any point $x\in\Omega$. Then, considering a random variable $X$ with values in $\Omega$ that we can easily simulate, we want to estimate the so-called failure probability 
$$p:=\P(g(X)>T),$$ 
where $T$ is a fixed threshold such that $p$ is strictly positive but possibly very low. 
We are motivated by applications where each evaluation of the 
function $g$ at a given $x\in \Omega$ is costly. For example, it could
be the result of a numerical simulation or of a physical experiment, that has to be
repeated for each new value of $x$. Therefore, one would like to limitate as much
as possible the number
of queries $x\mapsto g(x)$.\medskip

In this framework, a naive Monte Carlo method consists in simulating $n$ independent and identically distributed (i.i.d.) random variables $X_1,\dots,X_n$ with the same law as $X$, and considering the estimator
$$p_n:=\frac{1}{n}\sum_{i=1}^n\un_{g(X_i)>T}.$$
Since the random variables $\un_{g(X_i)>T}$ are i.i.d.\ with Bernoulli law $\mathcal{B}(p)$, this estimator is unbiased, strongly consistent, and satisfies the following central limit theorem:
$$\sqrt{n}(p_n-p)\xrightarrow[n\to\infty]{d}\mathcal{N}(0,p(1-p)).$$
However, this is an asymptotic result that is of no practical interest unless $n$ is of order $1/p$.  Indeed, if $n\ll 1/p$, as is the case in the situations we have in mind, then most of the time $p_n=0$ and this estimator is useless.\medskip

To circumvent this issue, the purpose of variance reduction techniques is to make the rare event less rare and, in turn, decrease the previous asymptotic variance, that is $\sigma^2=p(1-p)$. For example, instead of simulating  according to the law $\mu$ of $X$, the idea of Importance Sampling is to consider an auxiliary distribution $\tilde\mu$ such that, if $\tilde X\sim\tilde\mu$, the event $\{g(\tilde X)>T\}$ is not rare. If this is possible, one then just has to simulate $\tilde X_1,\dots,\tilde X_n$ i.i.d.\ according to $\tilde\mu$, and consider the estimator
$$\tilde p_n:=\frac{1}{n}\sum_{i=1}^n\frac{d\mu}{d\tilde\mu}(\tilde X_i)\un_{g(\tilde X_i)>T},$$
where $\frac{d\mu}{d\tilde\mu}$ stands for the Radon-Nikodym derivative of $\mu$ w.r.t.\ $\tilde\mu$. This technique has been widely applied in practice and may indeed lead to dramatic variance reductions. However, it requires a lot of information about both the failure domain 
\begin{equation}
\label{failure}
F:=\{x\in\Omega\; : \; g(x)>T\},
\end{equation}
and the law $\mu$ in order to find a relevant instrumental distribution $\tilde\mu$. There is a huge amount of literature on this topic. Among the first references, we can mention the paper by Kahn and Harris in particle physics \cite{kahn}, while the application to structural safety dates back at least to Harbitz \cite{harbitz}. We refer for example to the monograph \cite{bucklew04} for details.\medskip

Another classical variance reduction technique is Importance Splitting, introduced by Kahn and Harris \cite{kahn}. The principle is to consider several intermediate levels $-\infty=L_0<L_1<\dots<L_K=L$ such that each conditional probability $p^{(k)}:=\P(g(X)>L_k|g(X)>L_{k-1})$ is not small, and to apply the corresponding Bayes formula $p=p^{(1)}\dots p^{(K)}$. Accordingly, if $\hat p^{(k)}_n$ is an estimator of $p^{(k)}$, then a natural estimator for $p$ is simply 
$$\hat p_n=\hat p^{(1)}_n\dots \hat p^{(K)}_n.$$
In our specific context, this is the purpose of Subset Simulation \cite{Au2001263,au:901} and Adaptive Multilevel Splitting \cite{cdfg, cg4,cgr}. This is particularly suitable when $X$ has a density $f_X$ that is known up to a normalizing constant, like for example in Bayesian statistics and statistical physics, for one may then apply Markov Chain Monte Carlo (MCMC) techniques to estimate each intermediate probability $p_k$. As explained in \cite{ghm}, the best asymptotic variance that one can expect through splitting techniques is $s^2=p^2\log (p^{-1})$, which is indeed much lower than $\sigma^2=p(1-p)$. Nonetheless, if $t$ stands for the number of steps of each Markov chain constructed at each step $k$, this necessitates about $tn\log(n)\log(p^{-1})$ calls to $g$, which is much larger than the number $n$ of calls required for a naive Monte Carlo estimator. Therefore, when the simulation budget is severely limited, we can not directly apply these splitting techniques, even if we will recycle some of their ingredients in what follows.\medskip

In uncertainty quantification, a standard approach is to make more or less agressive assumptions on the failure domain $F$ and/or the function $g$.  One may trace back this idea to First (respectively Second) Order Reliability Methods, or FORM (respectively SORM) for short. In a nutshell, they assume that one can rewrite the probability of interest as $p=\P(L(Z)<0)$, where $Z$ stands for a standard Gaussian random vector in dimension $d$. Denoting $z^\star:=\argmin\{\|z\|_2, L(z)=0\}$ the so-called most probable point, the idea is to approximate $p$ by the probability that $Z$ falls in the neighborhood of $z^\star$. We refer to \cite{Livre_opti} and references therein for more details.\medskip

Alternatively, a widespread Bayesian framework consists in assuming that the function $g$ is the realization of a Gaussian random field, defined as a prior model. Conditionally on observed values of the function, the posterior model is still Gaussian. Its mean function provides a surrogate model used to approximate $g$ while the variance represents the uncertainty of the model (see, e.g., \cite{Rasmussen06}). It is then possible to construct sequential sampling strategies to estimate the probability of failure. It basically consists in determining each new evaluation of $g$ by minimizing a criterion that ensures that the precision of the considered estimator is improved. For instance, one may apply Stepwise Uncertainty Reduction strategies, which are formalized in \cite{MR2909621} in this Bayesian framework. 
Combined with Subset Simulation, this approach can also be found in \cite{Bect_2017} for the estimation of very small probabilities. Note that this Gaussian process modelling approach corresponds to an assumption on the regularity of $g$, notably through the choice of the correlation function (see, e.g., \cite{Rasmussen06}).\medskip

 Let us also finally mention that polynomial chaos expansions represent another set of popular non-intrusive metamodelling techniques. The principle is to approximate the mapping $g$ by a series of multivariate polynomials which are orthogonal with respect to the distributions of the input random variables $X_1,\dots,X_d$ (see, e.g., \cite{MR3350081} and references therein). In particular, it allows one to compute analytically Sobol' indices, which are a standard tool in uncertainty quantification.\medskip

Here we do not adopt a Bayesian/metamodelling approach. Concerning the function $g$, we suppose that it is $L$-Lipschitz, with $L$ known, and satisfies a so-called level set condition (see Assumption \ref{hyp:level-set}). As for the law of $X$, we assume that it admits a bounded density $f_X$ that is known up to a normalizing constant, or that we are able to simulate at least approximately according to the restriction of $f_X$ to any subset of $\Omega$. In this framework, building on \cite{MR3197124}, we show that the failure probability $p$ admits a lower (resp.\ upper) bound $p^{-}_n$ (resp.\ $p^{+}_n$ ) based on $n$ calls to $g$, and such that the approximation error satisfies, for $d\geq 2$,
\begin{equation}\label{lkscnlk}
E_n:=p^{+}_n-p^{-}_n\leq C n^{-\frac{1}{d-1}}.
\end{equation}
Even if this rate of convergence is classic in deterministic numerical integration, one may notice that the quantity of interest
$$p=\int_\Omega\mathbf{1}_{g(x)>T}f_X(x)dx$$
is the integral of a non regular function, which makes the problem non trivial.
In fact, we prove in Section \ref{dcljanlcna} that this rate is optimal, meaning that under this set of assumptions, no algorithm based on $n$ calls to $g$ can achieve a better approximation error.\medskip
 
Nevertheless, besides $n$ calls to $g$, our algorithm requires the sequential evaluation of probabilities of the form $\P(X\in Q)$, where $Q$ stands for a generic dyadic subcube of $\Omega$. It is generally impossible to do this exactly, but in many situations of interest we may apply standard MCMC techniques to estimate these probabilities with an arbitrary small (random) error. More explicitly, we propose to adopt here the same idea as in the abovementioned splitting techniques, by generating for each $Q$ a sample of size $N$ that is approximately i.i.d.\ according to the restriction of the law of $X$ to $Q$.\medskip

Putting all pieces together, we propose a sequential algorithm with global stochastic error
\begin{equation}\label{aoecoaezc}
|\hat E_n^N|\leq C n^{-\frac{1}{d-1}}+O_p(1/\sqrt{N}).
\end{equation}
We point out that, in the latter, since the second term does not require any supplementary evaluation of $g$, it can easily be made arbitrarily small, so that only the first one matters and, as already explained, this first term is optimal for our set of assumptions.\medskip

The article is organized as follows. Section \ref{ajncpazc} gives in more details the assumptions and the main results of this work. Section \ref{apecjapijdapid} explains the deterministic algorithm that allows us to reach the approximation error $E_n$ in \eqref{lkscnlk}, while the proof of its optimality is deferred to Section \ref{dcljanlcna}. Section \ref{khckasjbc} makes more explicit the term $O_p(1/\sqrt{N})$ in \eqref{aoecoaezc} and provides asymptotic confidence intervals for our estimators. All of these results are illustrated on a toy example in Section \ref{ksjbksjcxksj}, and the proof of Theorem \ref{th:esti-p} is detailed in Section \ref{qlsjcnlnc}.

\section{Assumptions and main results}
\label{ajncpazc}

Let $X$ be a random variable on $\Omega=[0,1]^d$ with $d\in\dN^\star$ and $g: \Omega\to \dR$. 
For a given threshold $T\in\dR$, let us denote by $F$ the failure domain and $p$ the 
failure probability, i.e., 
\[
F=\BRA{x\in \Omega\,:\,g(x)>T}
\quad
\text{and} 
\quad 
p=\dP(X\in F)=\dP(g(X)>T).  
\]
We intend to present and analyse an algorithm to estimate this failure probability as precisely as possible for a given total number $n$ of calls to~$g$. In all what follows, the upcoming assumptions will be of constant use. 

\begin{Ass}[Absolute continuity of the distribution of $X$]\label{kjncllxnlankx}
 The distribution of $X$ on $\Omega$ admits a bounded density function $f_X$ with respect to the Lebesgue measure~$\lambda$. In other words
$$
\|f_X\|_{L^\infty}=K<\infty.
$$
\end{Ass}

\begin{Ass}[Lipschitz smoothness]\label{kdjcnkajsc}
 The function $g$ is assumed to be $L$-Lipschitz with respect to the supremum norm on $\dR^d$, 
 i.e., 
 \[
\ABS{g(x)-g(\tilde x)}\leq L \NRM{x-\tilde x}_\infty, 
\quad  x,\tilde x\in \Omega.  
\]
Equivalently, $\nabla g\in L^\infty(\Omega)$ with
$\|\nabla g(x)\|_1 \leq L$ almost everywhere in $\Omega$.
\end{Ass}

Here, we denote by $\|z\|_p$ the $\ell^p$ norm of a
vector $z\in \R^d$. For the Euclidean norm, we sometime simply write $|z|:=\|z\|_2$.

\begin{Ass}[Level set condition]\label{hyp:level-set}
There exists a constant $M>0$ such that 
 \[
\lambda\PAR{\BRA{x\in \Omega \,:\,\ABS{g(x)-T}\leq \delta}}\leq M\delta,
\quad \delta>0.
\]
\end{Ass}

The constants $L$ and $M$ in Assumptions \ref{kdjcnkajsc} and \ref{hyp:level-set} are jointly coupled. Indeed, since the failure probability $p$ is such that $0<p<1$, there exists $x_T$ such that $g(x_T)=T$, and for all $x\in\Omega$, we have
\[
|g(x)-T|\leq L\|x-x_T\|_\infty\leq L,
\]
so that, if Assumption \ref{hyp:level-set} is satisfied,
\[
1=\lambda\left(\left\{x\in \Omega \,:\,\ABS{g(x)-T}\leq L\right\}\right)\leq ML,
\]
which shows that $ML\geq 1$. We introduce the constant
\begin{equation}
\label{const}
C:=ML,
\end{equation}
which will appear in the error estimates established for the algorithm presented
and analyzed further.

\begin{Rem} The level set Assumption \ref{hyp:level-set}
may be thought as reflecting the fact that the function $g$
is not too much flat in the vicinity of the level set 
$S_T=g^{-1}(\{T\})$. Indeed, when $d=1$, if $x_T$ 
is a point such that $g(x_T)=T$ and
assuming that $g$ is continuously differentiable, then
$g'(x_T)<M^{-1}$ would contradict
Assumption \ref{hyp:level-set} for $\delta$ small enough.
In the case $d\geq 2$, assuming that $g$ is continuously differentiable with $\nabla g(x) \neq 0$ for any $x\in S_T$, then $S_T$ is a compact submanifold of dimension $(d-1)$ and the coarea formula (see, e.g., \cite{evans}, Proposition 3 page 118) says that, for $\delta$ small enough,
\begin{equation}
\lambda\PAR{\BRA{x\in \Omega \,:\,\ABS{g(x)-T}\leq \delta}}=\int_{T-\delta}^{T+\delta}\left(\int_{S_t}\frac{ds}{|\nabla g(s)|}\right)dt,
\label{coarea}
\end{equation}
where  $ds$ stands for the $(d-1)$-dimensional Hausdorff measure on the level set $S_t=g^{-1}(\{t\})$. As a consequence, Assumption \ref{hyp:level-set} is fulfilled 
with constant $M$ for $\delta$ small enough as soon as 
$$
|\nabla g(x)|>\frac {2H} M,
$$
where $H$ is the $(d-1)$-dimensional Hausdorff measure of $S_T$, and therefore for all $\delta$ up to raising the value of $M$.
\end{Rem}

The proof of the following result is housed in Section \ref{aksjxnjzx} for the first part (definition of the algorithm and error rates), and in Section \ref{dcljanlcna} for the second part (optimality).

\begin{The}\label{zdjcnlzdc}
 Under Assumptions \ref{kjncllxnlankx}, \ref{kdjcnkajsc}, and \ref{hyp:level-set}, there exists an algorithm that, based on $n$ calls to $g$, constructs two deterministic bounds $p_n^-\leq p\leq p_n^+$ such that the approximation error $E_n:=p_n^+-p_n^-$ satisfies 
 \begin{itemize}
 \item If $d=1$, $E_n\leq 2CK\ 2^{-\frac{n}{2C}}$.
 \item If $d\geq 2$, $E_n\leq 8C^{\frac{d}{d-1}}K\ n^{-\frac{1}{d-1}}$.
 \end{itemize}
 In addition, these rates of convergence are optimal.
 \end{The}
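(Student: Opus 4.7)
The plan proceeds in three stages: an adaptive dyadic algorithm, an error bound via a slab argument, and a matching lower bound via an adversarial construction. For the algorithm, I decompose $\Omega$ along a dyadic tree, and each visited cube $Q$ of side $h_Q$ is classified by a single call to $g$ at its centre $x_Q$: since Assumption \ref{kdjcnkajsc} gives $|g(x) - g(x_Q)| \leq L h_Q/2$ for $x \in Q$, I label $Q$ as \emph{inside} when $g(x_Q) > T + L h_Q/2$, \emph{outside} when $g(x_Q) < T - L h_Q/2$, and \emph{undetermined} otherwise. The algorithm iteratively picks an undetermined leaf and splits it into its $2^d$ dyadic children, calling $g$ at each new centre. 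Setting $p_n^- := \sum_{Q\ \text{inside}} \dP(X\in Q)$ and $p_n^+ := p_n^- + \sum_{Q\ \text{undet.}} \dP(X\in Q)$, the classification rule immediately yields the sandwich $p_n^- \leq p \leq p_n^+$.

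The key geometric observation is that any undetermined cube of side $h_Q$ lies in the slab $S_h := \{x \in \Omega : |g(x) - T| \leq Lh\}$ for $h = h_Q$, by combining the classification rule with Lipschitzness. Assumption \ref{hyp:level-set} then gives $\lambda(S_h) \leq MLh = Ch$, so at scale $h = 2^{-j}$ there are at most $\lambda(S_h)/h^d \leq C\, 2^{j(d-1)}$ undetermined cubes. Running the algorithm breadth-first (exhausting depth $j$ before starting depth $j+1$), the total number of calls to reach depth $J$ satisfies $n \leq 1 + 2^d C \sum_{j=0}^{J-1} 2^{j(d-1)}$. For $d \geq 2$ this geometric sum is bounded by a constant multiple of $C\, 2^{J(d-1)}$, while $E_n \leq K \lambda(S_{h_J}) \leq K C h_J$ by Assumption \ref{kjncllxnlankx}; eliminating $J$ between these two relations produces $E_n = O(KC^{d/(d-1)} n^{-1/(d-1)})$, the constant $8$ coming from a careful bookkeeping of the geometric factors. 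For $d=1$, the same slab argument gives at most $O(C)$ undetermined intervals at every scale, so depth $J$ is reached in $O(CJ)$ calls and $E_n \leq KC\, 2^{-J}$ decays exponentially in $n$, yielding the bound $2CK\, 2^{-n/(2C)}$.

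The optimality is the main obstacle. My plan is to take $X$ uniform on $\Omega$ (so Assumption \ref{kjncllxnlankx} holds with $K=1$), start from the affine baseline $g_0(x) := L(x_d - 1/2)$ whose level set is the hyperplane $\{x_d = 1/2\}$, and place $m := 2n$ disjoint axis-parallel cubes $Q_i$ of side $h := (2n)^{-1/(d-1)}$ centred on that hyperplane. For each binary string $\eps \in \{0,1\}^m$ I form
\[
g_\eps := g_0 \; - \; \sum_{i=1}^{m} \eps_i\, L h\, \varphi\!\left(\frac{\cdot - y_i}{h}\right),
\]
where $\varphi$ is a fixed nonnegative Lipschitz bump supported in the unit cube, so that each activated bump shifts the local level set by order $h$ and raises the failure probability by an absolute constant times $h^d$. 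Any algorithm using $n$ queries touches at most $n$ of the $m$ disjoint bumps, leaving at least $n$ bits of $\eps$ unresolved; the adversary flips precisely those bits to produce two functions yielding identical observations but with failure probabilities differing by $\Theta(n h^d) = \Theta(n^{-1/(d-1)})$. The delicate point, which I must verify with care, is that the superposition $g_\eps$ still satisfies Assumptions \ref{kdjcnkajsc} and \ref{hyp:level-set} with the same constants $L$ and $M$ up to absolute factors that can be absorbed by rescaling; this is ensured by keeping the bumps disjoint and choosing $\varphi$ with a controlled gradient and a monotone radial profile so that level sets do not accumulate.
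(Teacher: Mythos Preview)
Your algorithm and upper-bound argument are essentially the paper's: dyadic refinement with centre evaluations, the slab inclusion $\{|g-T|\leq Lh\}$, the cube count $|\calU(j)|\leq C\,2^{j(d-1)}$, and the geometric sum for $n_k$ are all exactly what the paper does in Section~\ref{apecjapijdapid}. Your optimality construction for $d\geq 2$ is a close variant of the paper's (the paper uses the affine function $g(x)=-x^1$ with bumps $h_Q(x)=\mathrm{dist}(x,Q^c)$ near the face $\{x^1=0\}$, you use $g_0(x)=L(x_d-1/2)$ with generic bumps on the mid-hyperplane), and both lead to the same $n^{-1/(d-1)}$ gap. One point to watch: verifying Assumption~\ref{hyp:level-set} for the perturbed function with a constant $M$ \emph{independent of $n$} is where the paper does real work, invoking the coarea formula and a rescaling argument on each cube; you flag this as delicate but do not carry it out, and for a generic bump $\varphi$ it is not automatic.

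There is, however, a genuine gap for $d=1$. Your lower-bound construction sets $h=(2n)^{-1/(d-1)}$, which is meaningless when $d=1$, and the whole ``tile the hyperplane with $2n$ equal cubes'' idea collapses because a hyperplane in dimension $1$ is a single point. The exponential rate $2^{-\beta n}$ cannot be matched by placing $\Theta(n)$ equal-sized intervals: $n$ query points can always be chosen to meet every one of them. The paper's solution is to use a \emph{geometric} partition $I_{n+1}=[0,2^{-n}]$ and $I_j=[2^{-j},2^{-(j-1)}]$ for $1\leq j\leq n$; these $n+1$ intervals cannot all be hit by $n$ points, and the adversary hides a bump in one untouched interval, which may be as small as $2^{-n}$, producing the required $c\,2^{-n}$ gap while keeping the Lipschitz and level-set constants fixed. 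Your proposal needs a separate argument of this type for $d=1$.
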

 
\begin{Rem}
As it will become clear in Section \ref{apecjapijdapid}, the algorithm
that we propose only requires the knowledge of the Lipschitz constant $L$ (or an upper-bound), 
while that of $K$ and $M$ is not needed.
\end{Rem}
 
 The quantities $p_{n}^-$ and $p_{n}^+$ are defined 
 as the measures of certain sets of dyadic cubes that are determined by our algorithm. When $f_X=1$, that is when $X$ is uniformly distributed, this measure can be computed exactly,
 otherwise it may need to be estimated. This requires possibly many samples of $X$, but not any additional call of $g$.
 \medskip
 
 We begin with an idealized situation. The following result is established in Section \ref{lakzlkzx}.
 
\begin{The}\label{amicjpazijcpaic}
If for each dyadic cube $Q$ of $\Omega$, one is able to simulate an $N$ i.i.d.\ sample according to the restriction of the law of $X$ to $Q$, then, without any additional call to $g$,  we can construct two unbiased, strongly consistent and asymptotically Gaussian estimators $p_{n,N}^-$ and $p_{n,N}^+$ of the previous lower and upper bounds, i.e.,
$$\sqrt{N}\left(p_{n,N}^\pm-p_{n}^\pm\right)\xrightarrow[N\to\infty]{d}\mathcal{N}(0,(\sigma_n^\pm)^2),$$
along with consistent estimators $\sigma_{n,N}^-$ and $\sigma_{n,N}^+$ of the latter asymptotic standard deviations.
\end{The}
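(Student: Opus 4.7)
The plan is to reduce the problem to a finite-dimensional Monte Carlo estimation on the tree of dyadic cubes built by the deterministic algorithm of Theorem \ref{zdjcnlzdc}. Let $\calT_n$ denote this finite tree. From the construction of Section \ref{apecjapijdapid}, $p_n^-$ and $p_n^+$ can be written as $\sum_{Q \in \calL^-} \P(X \in Q)$ and $\sum_{Q \in \calL^+} \P(X \in Q)$, where $\calL^-$ and $\calL^+$ are two finite, disjoint collections of dyadic leaves of $\calT_n$. For each internal node with cube $Q$ and dyadic children $Q^{(1)}, \dots, Q^{(2^d)}$, I would draw an independent batch of $N$ i.i.d.\ samples $X_1^Q, \dots, X_N^Q$ from the law of $X$ restricted to $Q$, and form the empirical conditional probabilities
\[
\hat{p}_i^Q := \frac{1}{N} \sum_{j=1}^N \un_{X_j^Q \in Q^{(i)}}, \qquad i=1,\dots,2^d.
\]
For a leaf cube $Q$ reached by the chain $\Omega=Q_0 \supset Q_1 \supset \dots \supset Q_k = Q$ with $Q_j = Q_{j-1}^{(i_j)}$, set
\[
\hat{P}_N(Q) := \prod_{j=1}^k \hat{p}_{i_j}^{Q_{j-1}}, \qquad p_{n,N}^\pm := \sum_{Q \in \calL^\pm} \hat{P}_N(Q).
\]
No further call to $g$ is required, in line with the statement.

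The remaining steps are largely mechanical. Unbiasedness of $\hat{P}_N(Q)$ follows from the independence across nodes of the sampling batches, combined with the telescoping identity $\P(X \in Q) = \prod_{j=1}^k \P(X \in Q_j \mid X \in Q_{j-1})$; summing over $\calL^\pm$ yields unbiasedness of $p_{n,N}^\pm$. Strong consistency follows from applying the strong law of large numbers to each (finitely many) $\hat{p}_i^Q$ and invoking the continuous mapping theorem on the polynomial that assembles them into $p_{n,N}^\pm$.

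For the central limit theorem, I would collect all empirical conditional probabilities into a single finite-dimensional vector $\widehat{\bp}_N$. By the classical multivariate CLT applied independently at each node, $\sqrt{N}(\widehat{\bp}_N - \bp)$ converges in distribution to a centered Gaussian vector with block-diagonal covariance $\Sigma$, each block being the multinomial covariance at the corresponding node. Since $p_{n,N}^\pm = \Phi^\pm(\widehat{\bp}_N)$ for an explicit polynomial $\Phi^\pm$, the multivariate delta method gives
\[
\sqrt{N}\p{p_{n,N}^\pm - p_n^\pm} \xrightarrow[N\to\infty]{d} \mathcal{N}\p{0, \p{\sigma_n^\pm}^2}, \qquad \p{\sigma_n^\pm}^2 = \nabla \Phi^\pm(\bp)^\top \Sigma\, \nabla \Phi^\pm(\bp).
\]
The consistent estimator $\sigma_{n,N}^\pm$ is then obtained by plug-in, replacing $\bp$ by $\widehat{\bp}_N$; its consistency follows from the continuity of $\nabla \Phi^\pm$ and of the entries of $\Sigma$ in $\bp$, together with the strong consistency of $\widehat{\bp}_N$.

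The main obstacle will be keeping careful track of the covariance structure of $p_{n,N}^\pm$: distinct leaves $Q,Q' \in \calL^\pm$ typically share common ancestors in $\calT_n$, so the estimators $\hat{P}_N(Q)$ and $\hat{P}_N(Q')$ are correlated through their shared multiplicative factors. The delta method accommodates this automatically, but producing a tractable closed-form expression for $(\sigma_n^\pm)^2$ and its plug-in version requires organizing the gradient $\nabla \Phi^\pm$ along the tree, so that the variance can be computed recursively from leaves to root without expanding the full polynomial.
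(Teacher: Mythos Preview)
Your proposal is correct and follows essentially the same route as the paper: define the estimators as products of empirical conditional probabilities along the tree, obtain unbiasedness and strong consistency from independence and the SLLN, and derive the CLT by applying the multivariate CLT to the vector of $q_N(v)$'s (with block-diagonal multinomial covariance) followed by the delta method on the polynomial $F(\bq)=\sum_{u\in\calS}\prod_{v\in a(u)}q(v)$, with plug-in for the variance estimator. The paper additionally works out the explicit closed form $(\sigma_n^\pm)^2=\sum_{u}p(u)^2\sum_{v\in a(u)}\frac{1-q(v)}{q(v)}+\sum_{u\neq u'}p(u)p(u')\bigl[\sum_{v\in a(u\wedge u')}\frac{1-q(v)}{q(v)}-1\bigr]$, which is precisely the tree-organized expression you anticipate in your last paragraph.
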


Unfortunately, it is usually not possible to simulate an $N$ sample that is exactly i.i.d.\ according to the restriction of the law of $X$ to $Q$. However, if the pdf $f_X$ is known up to a normalizing constant (as is the case in many situations of interest), then one can do it at least approximately thanks to a Metropolis-Hastings algorithm. The upcoming proposition gives a flavor of the type of results we obtain in this context.

\begin{Pro}\label{peidpiejcpzejc}
If $f_X$ is continuous strictly positive on $\Omega$, and known up to a normalizing constant, then, without any additional call to $g$, we can construct two estimators $\hat p_{n,N}^-$ and $\hat p_{n,N}^+$ such that, for all $t\in\N^\star$,
$$\P\left(\hat p_{n,N}^\pm=p_{n,N}^\pm\right)\geq \left(1-Ar^t\right)^{m N}.$$
for some constants $A>0$, $0<r<1$, and $m\in\N^\star$. The same result holds true for $\sigma_{n,N}^-$ and $\sigma_{n,N}^+$.
\end{Pro}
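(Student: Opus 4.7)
The overall plan is to approximate the idealized sampling of Theorem \ref{amicjpazijcpaic} by running, on each dyadic cube visited by the deterministic algorithm, an independent Metropolis--Hastings chain targeted at the restriction of the law of $X$ to that cube, and then to quantify via coupling how often the approximate sample actually coincides with an exact i.i.d.\ draw. The key preliminary observation is that the algorithm of Theorem \ref{zdjcnlzdc}, after $n$ calls to $g$, produces a \emph{deterministic} collection of at most $m$ dyadic subcubes on which sampling is required; thus the construction only has to handle a bounded, data-independent number of target distributions, which is exactly the integer $m$ appearing in the statement.

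For a given such cube $Q$, I would use an independent Metropolis--Hastings chain with proposal uniform on $Q$ and target proportional to $f_X \one_Q$: this needs $f_X$ only up to its normalizing constant, which is exactly the available information. Since $f_X$ is continuous and strictly positive on the compact set $\Omega$, there exist $0<\alpha\leq \beta<\infty$ such that $\alpha\leq f_X\leq \beta$ pointwise on $\Omega$, so that a standard Doeblin/small-set argument for independent Metropolis--Hastings yields uniform geometric ergodicity: the total variation distance between the law of the chain at step $t$ and its target is bounded by $Ar^t$ with $A$ and $r=1-\alpha/\beta$ depending only on $\alpha,\beta$ and \emph{not} on $Q$. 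By the maximal coupling lemma applied to each such chain, one can realize jointly, on an enriched probability space, the chain output $Y^Q_t$ and an exact draw $Z^Q$ from the target in such a way that $\P(Y^Q_t\neq Z^Q)\leq Ar^t$.

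Running $N$ such chains independently in each of the $m$ cubes and invoking this maximal coupling independently for the $mN$ chains, the probability that \emph{every} approximate sample matches its exact counterpart is at least $(1-Ar^t)^{mN}$. On this event, the estimators $\hat p_{n,N}^\pm$ obtained by plugging the MCMC outputs into the formulas of Theorem \ref{amicjpazijcpaic} coincide with the idealized estimators $p_{n,N}^\pm$, and similarly $\hat\sigma_{n,N}^\pm=\sigma_{n,N}^\pm$, since both are measurable functions of the same underlying samples. This yields the announced inequality.

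The main technical obstacle is the \emph{uniform-in-$Q$} control of the ergodicity constants. Using the uniform proposal on $Q$ is convenient precisely because the acceptance ratio $\min(1, f_X(y)/f_X(x))$ is then bounded below by $\alpha/\beta$, and this lower bound depends neither on the size nor on the position of $Q$, so the same $r$ serves for every cube produced by the algorithm. The continuity and strict positivity of $f_X$ on the compact set $\Omega$ is what prevents this ratio from degenerating; under weaker hypotheses one would have to let $r$ deteriorate with the depth of the dyadic refinement, which would break the clean geometric form of the bound.
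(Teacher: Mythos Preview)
Your proposal is correct and follows essentially the same approach as the paper: independent Metropolis--Hastings with uniform proposal on each cube, uniform ergodicity (the paper invokes Tierney's Corollary~4 rather than a direct Doeblin argument), maximal coupling in total variation, and a product bound over the $mN$ independent chains with $m=|\Lambda_n|$. Your observation that one may take $r=1-\alpha/\beta$ uniformly in $Q$ is a mild sharpening; the paper instead sets $r=\max_{\bar v\in\Lambda_n} r_{\bar v}$ over the finitely many cubes and notes separately that $r_{\bar v}\to 0$ as the cubes shrink.
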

 
The proof of this proposition is detailed in Section \ref{zkjcajclacx}.

\section{Approximation error}\label{apecjapijdapid}

\subsection{Neveu's notation}

Let us denote $\calD$ the set of all dyadic subcubes of $\Omega$, and $\calD_j$ the set of all dyadic cubes with 
sidelength $2^{-j}$ for $j\geq 1$. Given a dyadic cube $Q$ in $\calD$, $c_Q$ stands for the 
center of $Q$. Each dyadic cube $Q$ has $2^d$ children numbered from $1$ to $2^d$ 
and each $Q\neq \Omega$ has exactly one parent.\medskip 

In the sequel, we will identify a dyadic cube in $\calD$ 
to a vertex in the infinite $2^d$-regular tree~$\calT$. It will be referred to thanks to Neveu's notation (see \cite{Neveu}): 
the root of the tree, associated to $\Omega$, is denoted by $\emptyset$ and, for any $k\in\dN^\star$ and 
$1\leq u_1,\ldots, u_k\leq 2^d$, the vertex $(u_1,\ldots,u_k)$ is the $u_k^{th}$ child of 
$(u_1,\ldots,u_{k-1})$. A vertex $u=(u_1,\ldots,u_k)$ in $\calT$ is then associated to a 
cube~$Q(u)$ in $\calD$. Notice that the sidelength of $Q(u)$ is $2^{-k}$ where $k$ is 
the depth of $u$ (distance between the root and $u$).\medskip

If $v$ is a vertex in~$\calT$, $\bar{v}$ (resp. $\calC(v)$) 
denotes the parent (resp. the set of the $2^d$ children) of $v$. The vertex $v$ is said to be an 
ancestor of $u$, and we denote $v\leq u$, if $Q(u)\subset Q(v)$ or, equivalently, if $v$ is a 
prefix of $u$. Notice that $u\leq u$. In the sequel, $a(v)$ stands for the set made of 
the ancestors of $v$, including $v$ but excluding the root for convenience. Finally, 
if $v$ and $w$ are two vertices, then $v\wedge w$ stands for the more recent common 
ancestor of $v$ and $w$. \medskip

We say that $\Lambda$ 
is a finite $2^d$-regular tree, if it is a finite subset of $\calT$
such that $u\in \Lambda$ and $v\leq u$ implies that $v\in \Lambda$. For any finite $2^d$-regular tree $\Lambda$ of $\calT$, the leaves (resp. internal vertices) 
of $\Lambda$ are the vertices in $\Lambda$ with no child (resp. with $2^d$ children) in 
$\Lambda$. The depth of $\Lambda$ is defined as the
maximal depth of the vertices in $\Lambda$.
See Figure~\ref{fi:tree} for an illustration. \medskip

For the purpose of our algorithm, the dyadic cubes (or vertices) are labelled according to the following rule that involves the
evaluation of $g$ at their centers. 

 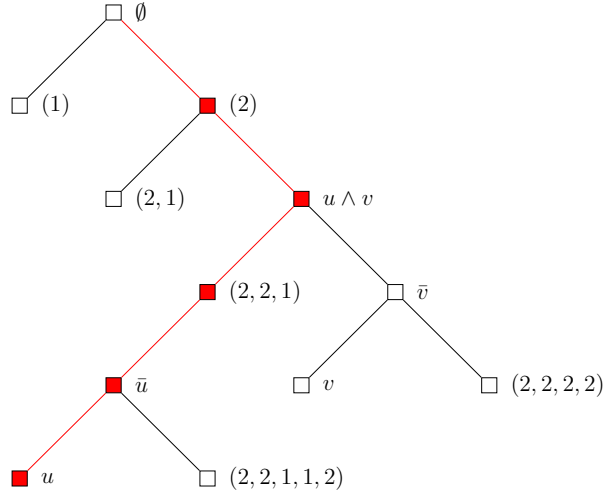
\begin{figure}
 \centering
\resizebox{8cm}{!}{%
\begin{tikzpicture}
\node[draw] (0) at (14,-7) {}; 
\node[right =0.1cm of 0] (0n) {$\emptyset$}; 

\node[below left = 2cm of 0,draw] (1) {}; \draw (0) -- (1);
\node[right =0.1cm of 1] (1n) {$(1)$}; 
\node[below right = 2cm of 0, fill=red, draw] (2) {}; \draw[red] (0) -- (2);
\node[right =0.1cm of 2] (2n) {$(2)$}; 

\node[below left = 2cm of 2,draw] (21) {}; \draw (2) -- (21);
\node[right =0.1cm of 21] (21n) {$(2,1)$}; 
\node[below right = 2cm of 2, fill=red, draw] (22) {}; \draw[red] (2) -- (22);
\node[right =0.1cm of 22] (22n) {$u\wedge v$}; 

\node[below left = 2cm of 22, fill=red, draw] (221) {}; \draw[red] (22) -- (221);
\node[right =0.1cm of 221] (221n) {$(2,2,1)$}; 
\node[below right = 2cm of 22, draw] (222) {}; \draw (22) -- (222);
\node[right =0.1cm of 222] (222n) {$\bar v$}; 

\node[below left = 2cm of 221, fill=red, draw] (2211) {}; \draw[red] (221) -- (2211);
\node[right =0.1cm of 2211] (2211n) {$\bar u$}; 

\node[below left = 2cm of 2211, fill=red, draw] (22111) {}; \draw[red] (2211) -- (22111);
\node[right =0.1cm of 22111] (22111n) {$u$}; 
\node[below right = 2cm of 2211, draw] (22112) {}; \draw (2211) -- (22112);
\node[right =0.1cm of 22112] (22112n) {$(2,2,1,1,2)$}; 

\node[below left = 2cm of 222,draw] (2221) {}; \draw(222) -- (2221);
\node[right =0.1cm of 2221] (2221n) {$v$}; 
\node[below right = 2cm of 222, draw] (2222) {}; \draw (222) -- (2222);
\node[right =0.1cm of 2222] (2222n) {$(2,2,2,2)$}; 
\end{tikzpicture}
}
   \caption{{Example of a $2$-regular finite tree of depth $5$ in dimension 1. The red line represents the set $a(u)$ of 
   the ancestors of the leaf~$u$.}}%
  \label{fi:tree}%
 \end{figure}

\begin{defi}[Label of a cube]\label{def:label}
The dyadic cube $Q$ with side length~$2^{-j}$ and center $c_Q$ is labelled  
\begin{itemize}
\item $\calI$  (inside) if $g(c_Q)> T+L2^{-j-1}$, 
\item $\calO$ (outside) if $g(c_Q)< T-L2^{-j-1}$, 
\item $\calU$ (uncertain) otherwise. 
\end{itemize}
\end{defi}

A cube with label $\calI$ is included in the failure set~$F$. Indeed, for any $Q\in\calD_j$ and any $x\in Q$,  
\[
\ABS{g(x)-g(c_Q)}\leq L\NRM{x-c_Q}_\infty\leq L 2^{-j-1}.
\]
As a consequence, if the label of $Q$ is $\calI$, then, for any $x\in Q$, we have
\[
g(x)\geq g(c_Q)-L2^{-j-1}>T.  
\]
Likewise, a cube with label $\calO$ is included in $F^c:=\Omega\setminus F$. Finally, a cube 
with label $\calU$ may intersect $F$ and/or $F^c$.

\subsection{Recursive construction of relevant trees} 

 
The algorithm starts with $\Lambda(0)=\BRA{\Omega}$, where $\Omega$ has the label $\calU$ 
and the depth of $\Lambda(0)$ is $0$. 
At a given step $k>0$, a finite $2^d$-regular tree 
$\Lambda(k)$ of depth $k$ has been constructed
with the following features:
\begin{itemize}
\item[(i)] internal vertices are all labelled as $\calU$,
\item[(ii)] leaves of depth lower than $k$ are labelled $\calI$ or $\calO$,
\item[(iii)] leaves of depth $k$ can have any label. 
\end{itemize}
Then, the tree $\Lambda(k+1)$ is obtained 
by performing a $2^d$-split on each leaf with label $\calU$
and evaluating $g$ at their center in order to label the new
leaves according to Definition \ref{def:label}. Clearly the new tree
$\Lambda(k+1)$ of depth $(k+1)$ has similar properties (see Figure \ref{fi:ex-tree}). 
\medskip

 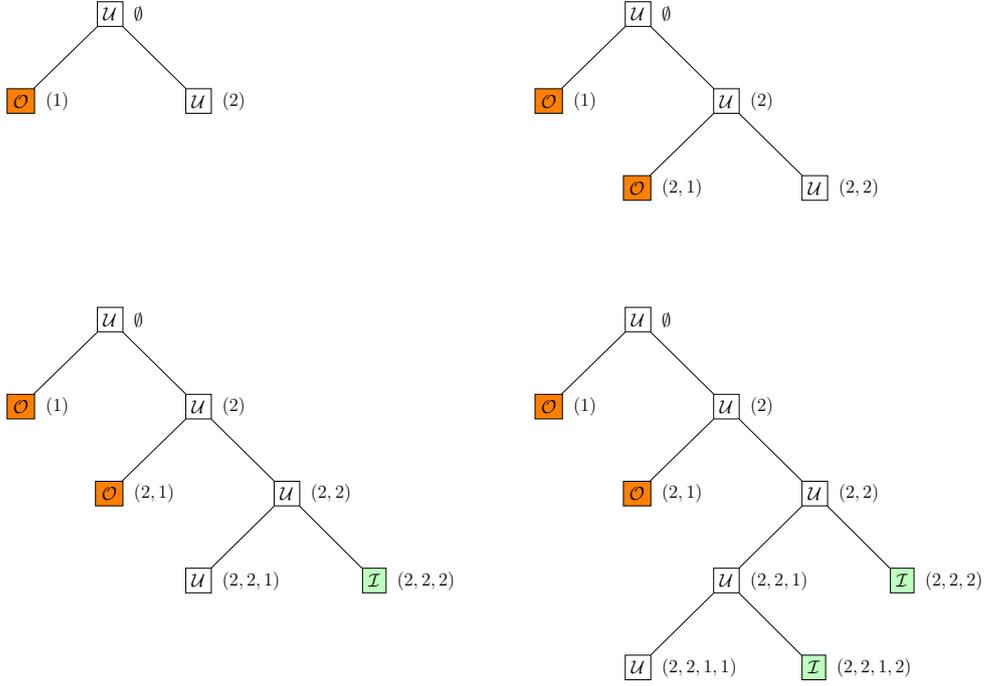
\begin{figure}
\resizebox{13cm}{!}{%
\centering
\begin{tikzpicture}
\node[draw] (0) at (4,0) {$\calU$}; 
\node[right =0.1cm of 0] (0n) {$\emptyset$}; 

\node[below left = 2cm of 0, fill=orange,draw] (1) {$\calO$}; \draw (0) -- (1);
\node[right =0.1cm of 1] (1n) {$(1)$}; 
\node[below right = 2cm of 0, draw] (2) {$\calU$}; \draw (0) -- (2);
\node[right =0.1cm of 2] (2n) {$(2)$}; 

\node[draw] (0) at (16,0) {$\calU$}; 
\node[right =0.1cm of 0] (0n) {$\emptyset$}; 

\node[below left = 2cm of 0, fill=orange,draw] (1) {$\calO$}; \draw (0) -- (1);
\node[right =0.1cm of 1] (1n) {$(1)$}; 
\node[below right = 2cm of 0, draw] (2) {$\calU$}; \draw (0) -- (2);
\node[right =0.1cm of 2] (2n) {$(2)$}; 

\node[below left = 2cm of 2, fill=orange,draw] (21) {$\calO$}; \draw (2) -- (21);
\node[right =0.1cm of 21] (21n) {$(2,1)$}; 
\node[below right = 2cm of 2, draw] (22) {$\calU$}; \draw (2) -- (22);
\node[right =0.1cm of 22] (22n) {$(2,2)$}; 

\node[draw] (0) at (4,-7) {$\calU$}; 
\node[right =0.1cm of 0] (0n) {$\emptyset$}; 

\node[below left = 2cm of 0, fill=orange,draw] (1) {$\calO$}; \draw (0) -- (1);
\node[right =0.1cm of 1] (1n) {$(1)$}; 
\node[below right = 2cm of 0, draw] (2) {$\calU$}; \draw (0) -- (2);
\node[right =0.1cm of 2] (2n) {$(2)$}; 

\node[below left = 2cm of 2, fill=orange,draw] (21) {$\calO$}; \draw (2) -- (21);
\node[right =0.1cm of 21] (21n) {$(2,1)$}; 
\node[below right = 2cm of 2, draw] (22) {$\calU$}; \draw (2) -- (22);
\node[right =0.1cm of 22] (22n) {$(2,2)$}; 

\node[below left = 2cm of 22,draw] (221) {$\calU$}; \draw (22) -- (221);
\node[right =0.1cm of 221] (221n) {$(2,2,1)$}; 
\node[below right = 2cm of 22, fill=green!25, draw] (222) {$\calI$}; \draw (22) -- (222);
\node[right =0.1cm of 222] (222n) {$(2,2,2)$}; 

\node[draw] (0) at (16,-7) {$\calU$}; 
\node[right =0.1cm of 0] (0n) {$\emptyset$}; 

\node[below left = 2cm of 0, fill=orange,draw] (1) {$\calO$}; \draw (0) -- (1);
\node[right =0.1cm of 1] (1n) {$(1)$}; 
\node[below right = 2cm of 0, draw] (2) {$\calU$}; \draw (0) -- (2);
\node[right =0.1cm of 2] (2n) {$(2)$}; 

\node[below left = 2cm of 2, fill=orange,draw] (21) {$\calO$}; \draw (2) -- (21);
\node[right =0.1cm of 21] (21n) {$(2,1)$}; 
\node[below right = 2cm of 2, draw] (22) {$\calU$}; \draw (2) -- (22);
\node[right =0.1cm of 22] (22n) {$(2,2)$}; 

\node[below left = 2cm of 22,draw] (221) {$\calU$}; \draw (22) -- (221);
\node[right =0.1cm of 221] (221n) {$(2,2,1)$}; 
\node[below right = 2cm of 22, fill=green!25, draw] (222) {$\calI$}; \draw (22) -- (222);
\node[right =0.1cm of 222] (222n) {$(2,2,2)$}; 

\node[below left = 2cm of 221,draw] (2211) {$\calU$}; \draw (221) -- (2211);
\node[right =0.1cm of 2211] (2211n) {$(2,2,1,1)$}; 
\node[below right = 2cm of 221, fill=green!25, draw] (2212) {$\calI$}; \draw (221) -- (2212);
\node[right =0.1cm of 2212] (2212n) {$(2,2,1,2)$}; 

%
%
%
%
%
\end{tikzpicture}
}
\caption{Example of a recursive construction of $\Lambda(1),\ldots,\Lambda(4)$ for $d=1$.}%
\label{fi:ex-tree}%
\end{figure}

Denoting $|\Lambda(k)|$ the cardinal of $\Lambda(k)$, the number $n_k$ of evaluations of $g$ that is involved in the construction of $\Lambda(k)$ is therefore given by $n_0=0$ and, for all $k\geq 1$,
$$
n_k=|\Lambda(k)|-1,
$$
since the evaluation at the center of $\Omega$ is useless when $p>0$. The following result gives an upper bound on this number. Recall that $C$ is the constant defined by \eqref{const}.
		
\begin{Pro}\label{prop:nk}
Let $k\geq 0$. If $d=1$, the number $n_k$ of evaluations of $g$ satisfies
\[
n_k\leq 2Ck.
\]  
If $d\geq 2$, then we have 
\[
n_k\leq 4C\ 2^{(d-1)k}.
\]  
\end{Pro}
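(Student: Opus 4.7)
The plan is to count directly the size of $\Lambda(k)$ by stratifying over depths, reducing the question to bounding the number of $\calU$-labeled cubes at each depth via the level set condition.

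First I would note that, by the recursive construction, every non-root vertex of $\Lambda(k)$ arises as one of the $2^d$ children of a $\calU$-labeled vertex at depth strictly smaller than $k$. Denoting by $U_j$ the number of $\calU$-labeled vertices of depth $j$ in $\Lambda(k)$, we therefore have
\[
|\Lambda(k)|=1+\sum_{j=0}^{k-1}2^d U_j,
\qquad
n_k=|\Lambda(k)|-1=2^d\sum_{j=0}^{k-1}U_j.
\]

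The key step is a uniform estimate $U_j\leq C\,2^{j(d-1)}$. If $Q\in\calD_j$ is labelled $\calU$, then $|g(c_Q)-T|\leq L 2^{-j-1}$, and combining with the Lipschitz bound $|g(x)-g(c_Q)|\leq L2^{-j-1}$ for $x\in Q$, we get
\[
Q\subset A_j:=\{x\in\Omega\,:\,|g(x)-T|\leq L 2^{-j}\}.
\]
Assumption \ref{hyp:level-set} gives $\lambda(A_j)\leq M L\,2^{-j}=C\,2^{-j}$. Since the $U_j$ cubes at depth $j$ are pairwise disjoint, each of volume $2^{-jd}$, summing volumes yields
\[
U_j\cdot 2^{-jd}\leq \lambda(A_j)\leq C\,2^{-j},
\qquad \text{hence}\qquad U_j\leq C\,2^{j(d-1)}.
\]

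It then suffices to plug this in and sum. For $d=1$ the bound reads $U_j\leq C$, so $n_k\leq 2\sum_{j=0}^{k-1}C=2Ck$. For $d\geq 2$, one has a geometric sum with ratio $2^{d-1}\geq 2$:
\[
n_k\leq 2^d C\sum_{j=0}^{k-1}2^{j(d-1)}\leq \frac{2^d}{2^{d-1}-1}\,C\,2^{k(d-1)}\leq 4C\,2^{k(d-1)},
\]
where the last inequality uses $2^d/(2^{d-1}-1)\leq 4$ for $d\geq 2$.

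The only non-routine ingredient is the volume-counting bound on $U_j$; the rest is bookkeeping and a geometric series. No particular obstacle is expected, as the constants align neatly (indeed $ML\geq 1$ from the discussion following Assumption \ref{hyp:level-set} guarantees that $U_0\leq 1\leq C$, so the base case is consistent).
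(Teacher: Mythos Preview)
Your proof is correct and follows essentially the same approach as the paper: both bound the number of $\calU$-labelled vertices at each depth $j$ by $C\,2^{j(d-1)}$ via the Lipschitz property and the level set condition, then sum $2^d$ times these bounds over $j=0,\dots,k-1$. The notation differs slightly (the paper writes $|\calU(j)|$ for your $U_j$), but the argument and the final geometric-sum estimate are identical.
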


\begin{proof}
 Since $n_0=0$, the result is clear for $k=0$. Therefore, let us consider the case where $k\geq 1$. For any $0\leq j\leq k-1$, let us denote by $\calU(j)$ the set of leaves 
 of $\Lambda(j)$ with label $\calU$ (see Figure \ref{fi:ex-tree}). Recall from Definition \ref{def:label} that this set is made of dyadic cubes with 
 side length~$2^{-j}$ such that, for any $Q\in\calU(j)$, 
 \[
\ABS{g(c_Q)-T}\leq \frac{L}{2^{j+1}}. 
\]
As a consequence, for any $x\in Q\in\calU(j)$,  Assumption \ref{kdjcnkajsc} gives
\[
\ABS{g(x)-T}\leq \ABS{g(x)-g(c_Q)}+\ABS{g(c_Q)-T}\leq \frac{L}{2^{j}}. 
\]
This ensures that 
\[
\bigcup_{Q\in \calU(j)} Q \subset \BRA{x\in\Omega\ :\ \ABS{g(x)-T}\leq \frac{L}{2^{j}}}. 
\]
Since the volume of each cube in $\calU(j)$ is $2^{-jd}$, this yields
\[
\frac{\ABS{\calU(j)}}{2^{jd}}\leq \lambda\PAR{ \BRA{x\in\Omega\ :\ \ABS{g(x)-T}\leq \frac{L}{2^{j}}}},
\]
with the understanding that $\ABS{\calU(j)}$ is the cardinal of $\calU(j)$. Thanks to Assumption~\ref{hyp:level-set}, we get that 
\begin{equation}\label{eq:card-Uj}
 \ABS{\calU(j)} \leq \PAR{\frac{C}{L} \frac{L}{2^{j}}} 2^{jd}=C 2^{j(d-1)}=:\mu_j.
\end{equation}
Thus, the construction of $\Lambda(j+1)$ requires at most $2^d\mu_j$ evaluations of $g$. As a 
consequence, we can bound the total number $n_k$ of calls to $g$ to construct $\Lambda(k)$ as follows: 
\[
n_k\leq \sum_{j=0}^{k-1}2^d\mu_j. 
\]
If $d\geq 2$, we are led to
\[
n_k\leq C2^d\frac{2^{(d-1)k}-1}{2^{d-1}-1}\leq 4C\ 2^{(d-1)k}.
\]
In the case $d=1$, we obtain $n_k\leq  2Ck$.
\end{proof}


\subsection{Control of the error}\label{aksjxnjzx}

For any $k\geq 0$, we denote by $\calI(k)$ (resp. $\calU(k)$) the leaves of $\Lambda(k)$ with 
label $\calI$ (resp. $\calU$). We can readily estimate the failure probability~$p$ thanks to the tree~$\Lambda(k)$ 
as follows: 
\[
p^-(k)\leq p \leq p^+(k),
\]
where
\begin{equation}\label{eq:pmpp}
p^-(k):=\sum_{Q\in \calI(k)} \dP(X\in Q) 
\quad\text{and}\quad 
p^+(k):=p^-(k)+\sum_{Q\in \calU(k)} \dP(X\in Q).   
\end{equation}

\begin{Lem}[Control of the error]\label{prop:error}
 For any $k\geq 0$, the estimations $p^-(k)$ and $p^+(k)$ of the failure probability $p$ 
 given by the tree~$\Lambda(k)$ are such that 
 \[
0\leq p^+(k)-p^-(k)\leq CK 2^{-k}.
\]
\end{Lem}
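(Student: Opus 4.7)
The lower bound $0\leq p^+(k)-p^-(k)$ is immediate from the definition \eqref{eq:pmpp}, since $p^+(k)-p^-(k)=\sum_{Q\in\calU(k)}\P(X\in Q)$ is a sum of non-negative terms. The substance lies in the upper bound, and the plan is to combine three ingredients already at hand: the geometric characterization of uncertain cubes, the level set assumption, and the bound on the density.

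First, I would recall from the proof of Proposition \ref{prop:nk} (the computation leading to \eqref{eq:card-Uj}) that every $Q\in\calU(k)$ has side length $2^{-k}$ and satisfies, for every $x\in Q$,
\[
|g(x)-T|\leq |g(x)-g(c_Q)|+|g(c_Q)-T|\leq L2^{-k-1}+L2^{-k-1}=L2^{-k},
\]
by Assumption \ref{kdjcnkajsc} and the definition of the label $\calU$. This gives the inclusion
\[
\bigcup_{Q\in\calU(k)}Q\ \subset\ \bigl\{x\in\Omega\,:\,|g(x)-T|\leq L2^{-k}\bigr\}.
\]

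Next, the level set Assumption \ref{hyp:level-set} applied with $\delta=L2^{-k}$ together with the definition $C=ML$ yields
\[
\lambda\!\left(\bigcup_{Q\in\calU(k)}Q\right)\leq M L2^{-k}=C2^{-k}.
\]
Finally, since cubes in $\calU(k)$ are pairwise disjoint up to sets of Lebesgue measure zero and $X$ has a bounded density $f_X$ with $\|f_X\|_{L^\infty}=K$ by Assumption \ref{kjncllxnlankx},
\[
\sum_{Q\in\calU(k)}\P(X\in Q)=\P\!\left(X\in\bigcup_{Q\in\calU(k)}Q\right)\leq K\cdot\lambda\!\left(\bigcup_{Q\in\calU(k)}Q\right)\leq CK2^{-k},
\]
which is precisely the claimed bound on $p^+(k)-p^-(k)$.

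There is no real obstacle here: the three assumptions have been tailored exactly to produce this estimate, and the argument is a direct chain of inequalities. The only minor point to be careful about is that the passage from $\sum\P(X\in Q)$ to $\P(X\in\bigcup Q)$ is an equality (not just an inequality) because sibling dyadic cubes at a common depth meet only on their boundaries, which have zero Lebesgue measure and hence zero mass under the absolutely continuous law of $X$.
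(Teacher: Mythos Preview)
Your proof is correct and follows essentially the same route as the paper's: both exploit the inclusion of $\bigcup_{Q\in\calU(k)}Q$ in the sublevel set $\{|g-T|\leq L2^{-k}\}$, the level set Assumption~\ref{hyp:level-set}, and the density bound from Assumption~\ref{kjncllxnlankx}. The only cosmetic difference is that the paper passes through the cardinality bound~\eqref{eq:card-Uj} (so $\sum_{Q}\P(X\in Q)\leq K2^{-dk}|\calU(k)|$), whereas you bound the Lebesgue measure of the union directly; since $|\calU(k)|\,2^{-dk}=\lambda(\bigcup Q)$, the two computations are the same inequality written in slightly different order.
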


\begin{proof}
Under Assumption \ref{kjncllxnlankx}, for any $k\in\dN$ and $Q\in \calU(k)$, we have 
\[
\lambda(Q)=2^{-dk} 
\quad\text{and}\quad 
\dP(X\in Q)\leq 2^{-dk}K. 
\]
As a consequence, the definition of $p^-(k)$ and $p^+(k)$ together with Equation~\eqref{eq:card-Uj}
ensure that 
$$ p^+(k)-p^-(k)=\sum_{Q\in \calU(k)} \dP(X\in Q)\leq \frac{\ABS{\calU(k)}K}{2^{dk}}\leq CK \frac{2^{k(d-1)}}{2^{dk}}.$$
This concludes the proof.
\end{proof}

Before going further, let us notice that, for any $n\geq 1$, there exists $k\geq 0$ such that $n_k\leq n< n_{k+1}$, with the convention $n_0=0$. We can apply the same algorithm as before, with the understanding that all the leaves of the tree $\Lambda(k)$ are explored while this is the case only for $(n-n_k)$ leaves with depth $(k+1)$ of the tree $\Lambda(k+1)$. This defines a subtree $\Lambda_n$ of $\Lambda(k+1)$. With obvious notation, the leaves of $\Lambda_n$ can be partitioned as $\mathcal{I}_n\cup\mathcal{U}_n\cup\mathcal{O}_n$. In this respect, we deduce upper and lower bounds $p_n^-$ and $p_n^+$ for $p$ as follows:
\begin{equation}
p^-_n:=\sum_{Q\in \calI_n} \dP(X\in Q) 
\quad\text{and}\quad 
p^+_n:=p^-_n+\sum_{Q\in \calU_n} \dP(X\in Q).   
\end{equation}
Clearly, we have
$$p^-(k)\leq p_n^-\leq p\leq p_n^+\leq p^+(k),$$
so that the approximation error $E_n:=p_n^+-p_n^-$ satisfies $E_n\leq p^+(k)-p^-(k)$.\medskip

With this in mind, we can now complete the proof of Theorem \ref{zdjcnlzdc}. When $d\geq 2$, according to Proposition~\ref{prop:nk}, we may write
 \[
n<n_{k+1}\leq 4C\ 2^{(d-1)(k+1)} 
\quad\text{or, equivalently,}\quad 
2^{-k}\leq 2\PAR{\frac{n}{4C}}^{-\frac{1}{d-1}}.
\]
Hence, Lemma \ref{prop:error} yields
\[
E_n\leq p^+(k)-p^-(k)\leq CK 2^{-k}
\leq 8C^{\frac{d}{d-1}}K n^{-\frac{1}{d-1}}.
\]
When $d=1$, the same reasoning gives
 \[
n<n_{k+1}\leq 2C(k+1)
\quad\text{or, equivalently,}\quad 
2^{-k}\leq 2^{1-\frac{n}{2C}},
\]
so that
\[
E_n\leq p^+(k)-p^-(k) \leq 2CK\ 2^{-\frac{n}{2C}}.
\]
This terminates the proof of the first part of Theorem \ref{zdjcnlzdc}. The fact that this error is optimal is shown in Section \ref{dcljanlcna}.

\section{Estimation error}
\label{khckasjbc}

We return to the notation of Section \ref{aksjxnjzx} and recall that, for any $n\geq 2$, we denote by $\calI_n$ (resp. $\calU_n$) the leaves of $\Lambda_n$ with 
label $\calI$ (resp. $\calU$), so that
\[
p^-_n\leq p \leq p^+_n,
\]
where
\begin{equation}\label{eq:pmpp}
p^-_n:=\sum_{Q\in \calI_n} \dP(X\in Q) 
\quad\text{and}\quad 
p^+_n:=p^-_n+\sum_{Q\in \calU_n} \dP(X\in Q).   
\end{equation}
Our goal in this section is to estimate $p^-_n$ and $p^+_n$ with no additional call to $g$. We first do it by assuming that, for each vertex $u\in\Lambda_n$, we can simulate an $N$ i.i.d.\ sample distributed according to the law of $X$ given that it belongs to $Q(u)$. This allows us to propose in Section \ref{lakzlkzx} two idealized estimators $p_{n,N}^-$ and $p_{n,N}^+$ along with their asymptotic variances. In Section \ref{zkjcajclacx}, thanks to MCMC techniques, we construct two estimators $\hat p_{n,N}^-$ and $\hat p_{n,N}^+$ of the latters provided that the density $f_X$ is known up to a normalizing constant.

\subsection{Estimation error in an idealized case}\label{lakzlkzx}

From a given tree $\Lambda_n$, one can estimate the failure probability~$p$ thanks to 
$p^-_n$ and $p^+_n$ defined in Equation~\eqref{eq:pmpp}. To that end, one 
has to compute (or estimate) the probability 
\[
p(u):=\dP(X\in Q(u)),
\]
for each leaf $u$ of $\Lambda_k$. If $u$ is far from the root, then $p(u)$ should be very 
small and difficult to estimate directly through a naive Monte Carlo method, as explained in Section \ref{intro}. Therefore, we propose to apply 
a splitting strategy inspired by rare event estimation.\medskip 

For a given leaf $u\in\Lambda_n$, recall that $a(u)$ stands for the set of 
the ancestors of $u$, including $u$ but excluding the root for convenience. 
Since $\dP(X\in\Omega)=1$, Bayes formula ensures that 
\[
\dP(X\in Q(u))=\prod_{v\in a(u)} \dP(X\in Q(v)\vert X\in Q(\bar v)),
\]
which can be reformulated as follows
\[
p(u)=\prod_{v\in a(u)} q(v) 
\quad\text{where}\quad 
q(v):=\dP(X\in Q(v)\vert X\in Q(\bar v)).
\]

\begin{Ass}[Perfect samplings]\label{ass:perfect}
 Recall that $\calT$ stands for the infinite $2^d$-regular tree. For any $v\in \calT$, consider 
 a sequence ${(X^v_i)}_{i\geq 1}$ of i.i.d.\ random variables with distribution $\calL(X\vert X\in Q(v))$ 
and assume that the sequences ${(X^v)}_{v\in\calT}$ are independent.  
\end{Ass}

\begin{Def}[Ideal estimators]\label{lazjcnlljcjc}
For $N\geq 1$ and $u,v\in\calT$, we define
\begin{equation}\label{eq:def-pchap}
C_N^v:=\sum_{i=1}^N \un_\BRA{X^{\bar v}_i\in Q(v)},
\quad 
q_N(v):=\frac{C^v_N}{N}
\quad\text{and}\quad 
p_N(u):=\prod_{v\in a(u)}q_N(v).
\end{equation}
\end{Def}

\begin{Rem}[Multinomial distribution and unbiasedness]\label{lazclncskal}
The random variable $C_N^v$ is the number of random variables 
${(X^{\bar v}_i)}_{1\leq i\leq N}$ which are in fact in the cube $Q(v)$. Let $w$ be a fixed vertex. 
The distribution of the random vector ${(C^v_N)}_{v\in c(w)}$ 
is the multinomial distribution with parameters $N$ and ${(q(v))}_{v\in c(w)}$. 
As a consequence, $q_N(v)$ is a strongly consistent and unbiased estimator of $q(v)$.
In addition, for  $k$ different vertices $w_1,w_2,\ldots,w_k$ in $\calT$, the vectors 
\[
{(C^v_N)}_{v\in c(w_1)}, {(C^v_N)}_{v\in c(w_2)},\ldots,{(C^v_N)}_{v\in c(w_k)}
\]
are independent. From this we deduce that $p_N(u)$ is also unbiased.  
\end{Rem}

\begin{Rem}
The random variables ${(q_N(v))}_{v\in a(u)}$ are independent. Nevertheless, for two different leaves $u$ and $u'$, $p_N(u)$ and 
$p_N(u')$ are not independent.  
\end{Rem}

Our next result, whose proof is deferred to Section \ref{qlsjcnlnc}, provides asymptotic properties (namely, consistency and asymptotic normality) for the estimator $p_N(\calS)$ of the probability $p(\calS)$ associated to any set of leaves $\calS$. One may keep in mind that, for our problem, we will apply this result with $\calS=\calI_n$ and $\calS=\calI_n\cup\calU_n$, in which case $p(\calS)$ (respectively $p_N(\calS)$) corresponds to $p_n^-$ and $p_n^+$ (respectively $p_{n,N}^-$ and $p_{n,N}^+$).\medskip

\begin{The}\label{th:esti-p}
For any set $\calS$ of leaves of a tree~$\Lambda$, one can estimate 
\[
p(\calS):=\sum_{u\in \calS}p(u)
\quad \text{by} \quad 
p_N(\calS):=\sum_{u\in \calS}p_N(u),
\]
where $p_N(u)$ is defined in \eqref{eq:def-pchap}. The estimator~$p_N(\calS)$ 
is unbiased and strongly consistent:
\[
p_N(\calS) \xrightarrow[N\to\infty]{a.s.} p(\calS).
\]
Moreover, it is asymptotically normal, namely  
\[
\sqrt{N}(p_N(\calS)-p(\calS))\xrightarrow[N\to\infty]{\mathcal{D}}\calN(0,\sigma^2),
\]
where
 \[
\sigma^2=\sum_{u\in\calS}p(u)^2\sum_{v\in a(u)}\frac{1-q(v)}{q(v)}
+\sum_{\substack{u,u'\in\calS\\ u\neq u'}}p(u)p(u')
\SBRA{\sum_{v\in a(u\wedge u')}\frac{1-q(v)}{q(v)}-1}.
\]
\end{The}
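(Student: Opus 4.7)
The plan is three main steps: unbiasedness and strong consistency via the independence structure, a multivariate central limit theorem combined with the delta method, and then a combinatorial simplification to match the announced formula for $\sigma^2$. For the first step, observe that for fixed $u \in \calS$ the family $\{\bar v : v \in a(u)\}$ consists of the distinct vertices $\emptyset$ and the strict ancestors of $u$. Assumption \ref{ass:perfect} therefore makes the random variables $(q_N(v))_{v\in a(u)}$ mutually independent, and each $q_N(v) = C_N^v/N$ is unbiased for $q(v)$ and converges a.s.\ to $q(v)$ by the strong law of large numbers. Taking the finite product over $a(u)$ and then the finite sum over $\calS$ yields $\E[p_N(\calS)] = p(\calS)$ and $p_N(\calS) \to p(\calS)$ a.s.

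For the second step, set $V = \bigcup_{u \in \calS} a(u)$ (a finite set of non-root vertices) and regroup $V$ by parents. For each internal vertex $w$ having at least one child in $V$, the vector $(C_N^v)_{v \in \calC(w)}$ is multinomial with parameters $N$ and $(q(v))_{v \in \calC(w)}$, and these multinomial vectors are independent across $w$ by Assumption \ref{ass:perfect}. A block-wise application of the multivariate central limit theorem gives
\[
\sqrt{N}\,\bigl(q_N(v) - q(v)\bigr)_{v \in V} \xrightarrow[N\to\infty]{d} \mathbf{G} = (G_v)_{v \in V},
\]
a centered Gaussian vector with $\mathrm{Cov}(G_v, G_{v'}) = q(v)(1-q(v))$ if $v = v'$, $-q(v)q(v')$ if $v \neq v'$ share the same parent, and $0$ otherwise. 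Since $(q_v)_{v\in V} \mapsto \sum_{u \in \calS}\prod_{v\in a(u)} q_v$ is polynomial and hence smooth, the delta method delivers
\[
\sqrt{N}\bigl(p_N(\calS) - p(\calS)\bigr) \xrightarrow[N\to\infty]{d} \calN(0, \sigma^2),\qquad \sigma^2 = \sum_{u,u' \in \calS}\sum_{\substack{v \in a(u)\\ v' \in a(u')}} \frac{p(u)p(u')}{q(v)q(v')}\,\mathrm{Cov}(G_v, G_{v'}).
\]

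The main obstacle is the combinatorial simplification of this quadratic form. I would organize the double sum according to the covariance type of the pair $(v, v')$. The diagonal contribution $v = v'$ requires $v \in a(u) \cap a(u') = a(u \wedge u')$, producing $p(u)p(u')\sum_{v \in a(u\wedge u')} (1-q(v))/q(v)$ per pair $(u, u')$. The sibling contribution, with $v \neq v'$, $\bar v = \bar v'$, $v \in a(u)$ and $v' \in a(u')$, is the crux: when $u = u'$ no such pair exists, since two distinct siblings cannot both lie on the single root-to-$u$ path; when $u \neq u'$ the unique such pair consists of the two distinct children of $u \wedge u'$ lying on the paths to $u$ and to $u'$ respectively, and this pair contributes exactly $-p(u)p(u')$ to $\sigma^2$. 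All remaining pairs $(v,v')$ give zero covariance by independence of the samples attached to distinct parents. Adding the diagonal and sibling contributions, separating $u = u'$ (no sibling term) from $u \neq u'$ (a $-1$ correction arising from the MRCA siblings), recovers exactly the stated expression for $\sigma^2$.
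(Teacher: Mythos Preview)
Your proof is correct and follows essentially the same route as the paper: a multivariate CLT for the vector $(q_N(v))_v$ with the multinomial block covariance, the delta method applied to the polynomial $F(\bq)=\sum_{u\in\calS}\prod_{v\in a(u)}q(v)$, and the same combinatorial split into diagonal terms (handled via $a(u)\cap a(u')=a(u\wedge u')$) and sibling terms. Your identification of the unique sibling pair at $u\wedge u'$ giving the $-p(u)p(u')$ correction is in fact more explicit than the paper's one-line simplification of the term it calls $B$.
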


Remark that if $q(v)=0$ then $p(u)=0$ whenever $v\leq u$, so that one can cancel $u$ from the set of leaves $\calS$ and the expression of $\sigma^2$ is always well-defined.

\begin{Rem}[Variance estimation]\label{kjazxjcx}
Recall that each $q(v)$ is strictly positive and consistently estimated on the fly by $q_N(v)$, so that 
$\sigma^2$ is readily estimated by 
\[
\sigma_N^2=\sum_{u\in\calS}p_N(u)^2\sum_{v\in a(u)}\frac{1-q_N(v)}{q_N(v)}
+\sum_{\substack{u,u'\in\calS\\ u\neq u'}}p_N(u)p_N(u')
\SBRA{\sum_{v\in a(u\wedge u')}\frac{1-q_N(v)}{q_N(v)}-1}
\]
and $\sigma_N^2$ goes almost surely to $\sigma^2$ when $N$ goes to infinity. Hence, Slutsky's lemma ensures that
\[
\sqrt{N}\ \frac{p_N(\calS)-p(\calS)}{\sigma_N}\xrightarrow[N\to\infty]{d}\calN(0,1).
\]
In particular, the latter provides asymptotic confidence intervals for $p(\calS)$.
\end{Rem}

For our concern, recall that $p^-_n\leq p \leq p^+_n$ where
\[
p^-_n=p(\calI_n)=\sum_{u\in \calI_n} p(u) 
\quad\text{and}\quad 
p^+_n=p(\calI_n\cup\calU_n). 
\]
Hence, the sets of leaves of interest are $\calS=\calI_n$ and $\calS=\calI_n\cup\calU_n$. Indeed, the previous results establish that
\[
p_{n,N}^-=p_N(\calI_n)\xrightarrow[N\to\infty]{a.s.}p^-_n\leq p\leq p^+_n\xleftarrow[N\to\infty]{a.s.}p_{N}(\calI_n\cup\calU_n)=p_{n,N}^+,
\]
as well as
$$\sqrt{N}\left(p_{n,N}^\pm-p_{n}^\pm\right)\xrightarrow[N\to\infty]{d}\mathcal{N}(0,(\sigma_n^\pm)^2).$$
In addition, by Remark \ref{kjazxjcx}, we can construct on the fly consistent estimators $\sigma_{n,N}^-$ and $\sigma_{n,N}^+$ of the latter asymptotic standard deviations. This closes the proof of Theorem \ref{amicjpazijcpaic}.\medskip

\begin{Rem}[Asymptotic confidence intervals]\label{aiclazcalzd}
Denote by $\Phi$ the cumulative distribution function of the standard normal distribution so that, for $\alpha\in(0,1)$, $\Phi^{-1}(1-\alpha/2)$ is the $(1-\alpha/2)$ quantile. If we define 
$$m_{n,N}:=p_{n,N}^- -\frac{\Phi^{-1}(1-\alpha/2)\sigma_{n,N}^-}{\sqrt{N}}$$
as well as
$$ M_{n,N}:=p_{n,N}^+ +\frac{\Phi^{-1}(1-\alpha/2)\sigma_{n,N}^+}{\sqrt{N}},$$
then $[m_{n,N},1]$ and $[0,M_{n,N}]$ are $100(1-\alpha/2)\%$ asymptotic confidence intervals for, respectively, $p_n^-$ and $p_n^+$. Since $p_n^-\leq p\leq p_n^+$, the union bound ensures that $[m_{n,N},M_{n,N}]$ is a $100(1-\alpha)\%$ asymptotic confidence interval for $p$. 
\end{Rem}



\subsection{Estimation error in practice}\label{zkjcajclacx}

The purpose of this section is to prove Proposition \ref{peidpiejcpzejc}. Recall from Definition \ref{lazjcnlljcjc} that each leaf probability 
\[
p(u)=\dP(X\in Q(u))=\prod_{v\in a(u)}\dP(X\in Q(v)\vert X\in Q(\bar v))=\prod_{v\in a(u)}q(v)
\]
 is estimated by 
\[
p_N(u)=\prod_{v\in a(u)}q_N(v)\quad\text{where}\quad q_N(v)=\frac{C^v_N}{N}=\frac{1}{N}\sum_{i=1}^N \un_\BRA{X^{\bar v}_i\in Q(v)}.
\]
To apply the results of Theorem \ref{th:esti-p}, this supposes that, for each $\bar v$, we have a sample of $N$ i.i.d.\ random variables $X^{\bar v}_i$. In addition,  for two vertices $v$ and $v'$ such that ${\bar v}\neq{\bar v'}$, these samples must be independent. The present section explains how to reach this goal, at least approximately.\medskip

Consider a fixed vertex $\bar v$, denote $\mu_{\bar v}=\mathcal{L}(X|X\in Q(\bar v))$, and $f_{\bar v}$ the corresponding probability density function, that is
$$\mu_{\bar v}(dx)=f_{\bar v}(x)dx=\frac{1}{\P(X\in Q(\bar v))}f_X(x)\un_{x\in Q(\bar v)}.$$
Starting from a point $X_0\sim\mathcal{U}_{\bar v}$ the uniform law on $Q(\bar v)$, the Metropolis-Hastings algorithm allows us to construct a Markov chain $(X_n)$ with asymptotic distribution $\mu_{\bar v}$.\medskip

We refer the interested reader to Tierney \cite{MR1329166} for a thorough presentation as well as numerous theoretical results on Markov chain Monte Carlo methods. For our purpose, we just present the idea for a specific choice of the Markov dynamics, which turns out to be a particular case of independent Metropolis.\medskip

 Here is the mechanism: starting from $X_t$, simulate $X'_{t}\sim\mathcal{U}_{\bar v}$ and set
\begin{equation}\label{jcbkabckabc}
X_{t+1}:=X'_t\un_{U_{t+1}\leq f_X(X'_t)/f_X(X_t)}+X_t\un_{U_{t+1}> f_X(X'_t)/f_X(X_t)},
\end{equation}
where $(U_t)_{t\in\N^\star}$ is a sequence of i.i.d.\ random variables with uniform law on $[0,1]$. Needless to say, in the previous expression,  $X_t$, $X'_{t}$, and $U_{t+1}$ are also assumed independent. It is readily seen that, if we denote by $K_{\bar v}$ the transition kernel associated to this Markov chain, then $K_{\bar v}$ is $\mu_{\bar v}$-reversible so that, under appropriate assumptions, $(X_t)$ goes in distribution to $\mu_{\bar v}$.\medskip

In order to make this convergence more precise, let us recall that the total variation distance between two probability measures $\mu$ and $\nu$ on $Q(\bar v)$ is 
$$\|\mu-\nu\|_{TV}:=\sup_{B\in\mathcal{B}_{\bar v}}|\mu(B)-\nu(B)|,$$
where $\mathcal{B}_{\bar v}$ is the collection of all Borel sets on $Q(\bar v)$. Denoting $\delta_x$ the Dirac measure at $x$ and $\delta_xK_{\bar v}^t$ the law of $X_t$ for the above Markov chain with initial condition $X_0=x$, we say that the chain is uniformly ergodic on $Q(\bar v)$ if there exist $A_{\bar v}>0$ and $0<r_{\bar v}<1$ such that, for all $t\in\N^\star$, 
$$\sup_{x\in Q(\bar v)}\|\delta_xK_{\bar v}^t-\mu_{\bar v}\|_{TV}\leq A_{\bar v} r_{\bar v}^t.$$ 
Let $g_{\bar v}$ stand for the density of the uniform distribution on $Q(\bar v)$ and
\begin{equation}\label{ajcac} 
\beta_{\bar v}^{-1}:=\sup_{x\in Q(\bar v)}\frac{f_{\bar v}(x)}{g_{\bar v}(x)},
\end{equation}
then Corollary 4 in \cite{MR1329166} ensures that the Markov chain $(X_t)$ is uniformly ergodic with convergence rate $r_{\bar v}\leq1-\beta_{\bar v}$. In our context, notice that the latter is always strictly less than 1 if, for example, $f_X$ is continuous and strictly positive on $\Omega=[0,1]^d$, hence our assumption in Proposition \ref{peidpiejcpzejc}.\medskip

To see the consequence of this result in our context, remember the coupling interpretation of the total variation distance, that is
$$\|\mu-\nu\|_{TV}=\inf_{(X,Y)}\P(X\neq Y),$$
where the infimum is over all couples of random variables on $Q(\bar v)\times Q(\bar v)$ with marginal laws $\mu$ and $\nu$. More precisely, given $X$ with law $\nu$, it is always possible to construct a random variable $Y$ with law $\mu$ such that the equality is achieved, i.e., $\P(X\neq Y)=\|\mu-\nu\|_{TV}$.\medskip

Hence, if we consider as above a Markov chain $(X_t^{\bar v})$ with arbitrary initial condition, for example $X_0^{\bar v}$ with uniform law $\mathcal{U}_{\bar v}$ on $Q(\bar v)$, there exists a random variable $X_\infty^{\bar v}$ with law $\mu_{\bar v}$ such that 
$$\P(X_t^{\bar v}\neq X_\infty^{\bar v})=\|\mathcal{U}_{\bar v}K_{\bar v}^t-\mu_{\bar v}\|_{TV}\leq A_{\bar v} r_{\bar v}^t.$$
Therefore, if we start from $N$ i.i.d.\ initial conditions $\mathbf{X}_0^{\bar v}:=(X_0^{\bar v,(1)},\dots,X_0^{\bar v,(N)})$ with uniform distribution on $Q(\bar v)$, and run independently during $t$ steps the previous Metropolis algorithm to obtain the sample $\mathbf{X}_t^{\bar v}:=(X_t^{\bar v,(1)},\dots,X_t^{\bar v,(N)})$, we deduce that
$$\P(\mathbf{X}_t^{\bar v}=\mathbf{X}_\infty^{\bar v})\geq (1-A_{\bar v} r_{\bar v}^t)^N,$$
where $\mathbf{X}_\infty^{\bar v}:=(X_\infty^{\bar v,(1)},\dots,X_\infty^{\bar v,(N)})\sim\mu_{\bar v}^{\otimes n}$.\medskip

Next, apply the previous procedure to each vertex $\bar v$ of the considered tree $\Lambda$, denote by $\mathcal{X}_t:=(\mathbf{X}_t^{\bar v})_{\bar v\in\Lambda}$ all the corresponding sets of $N$ i.i.d.\ samples, and $\mathcal{X}_\infty:=(\mathbf{X}_\infty^{\bar v})_{\bar v\in\Lambda}$ the corresponding sets of $N$ i.i.d.\ ``idealized'' samples. Denoting $A_{\Lambda}:=\max_{\bar v\in\Lambda}A_{\bar v}$ and $r_{\Lambda}:=\max_{\bar v\in\Lambda}r_{\bar v}\in(0,1)$, we deduce that
$$\P(\mathcal{X}_t=\mathcal{X}_\infty)\geq (1-A_{\Lambda} r_{\Lambda}^t)^{|\Lambda|N}.$$
For each vertex $v$ and each leaf $u$, consider the estimators
\[
\hat p_N(u):=\prod_{v\in a(u)}\hat q_N(v)\quad\text{where}\quad \hat q_N(v):=\frac{1}{N}\sum_{i=1}^N \un_\BRA{X^{\bar v,(i)}_t\in Q(v)},
\]
and, for any set $\calS$ of leaves of the tree~$\Lambda$,
$$\hat p_N(\calS):=\sum_{u\in \calS}\hat p_N(u).$$
Clearly, on the event $\{\mathcal{X}_t=\mathcal{X}_\infty\}$, we have $\hat p_N(\calS)= p_N(\calS)$, which means that
$$\P(\hat p_N(\calS)= p_N(\calS))\geq (1-A_{\Lambda} r_{\Lambda}^t)^{|\Lambda|N},$$
 where $p_N(\calS)$ is the ideal estimator defined in Theorem \ref{th:esti-p}. Finally, it suffices to consider $\calS=\mathcal{I}_n$ and $\calS=\mathcal{I}_n\cup\mathcal{U}_n$ to conclude the proof of Proposition \ref{peidpiejcpzejc}.
 
 \begin{Rem}[Confidence intervals in practice]\label{aiclazcalzdbis}
Mutatis mutandis, the result of Remark \ref{aiclazcalzd} is still valid. Specifically, if we denote 
$$\hat m_{n,N}:=\hat p_{n,N}^- -\frac{\Phi^{-1}(1-\alpha/2)\hat \sigma_{n,N}^-}{\sqrt{N}}$$
as well as
$$\hat M_{n,N}:=\hat p_{n,N}^+ +\frac{\Phi^{-1}(1-\alpha/2)\hat \sigma_{n,N}^+}{\sqrt{N}},$$
then on the event $\{\mathcal{X}_t=\mathcal{X}_\infty\}$, $[\hat m_{n,N},\hat M_{n,N}]$ is a $100(1-\alpha)\%$ asymptotic confidence interval for $p$. This will be illustrated in Section \ref{ksjbksjcxksj}.
\end{Rem}

\begin{Rem}
Returning to \eqref{ajcac}, one can notice that the smaller the side length of $Q(\bar v)$, the faster the convergence of the Metropolis algorithm. Indeed, denoting $c_{Q(\bar v)}$ its center and $\lambda(Q(\bar v))$ its Lebesgue measure, the continuity of $f_X$ ensures that, when $\lambda(Q(\bar v))\to 0$,
$$f_{\bar v}(x)=\frac{f_X(x)\un_{x\in Q(\bar v)}}{\P(X\in Q(\bar v))}\approx\frac{f_X(c_{Q(\bar v)})\un_{x\in Q(\bar v)}}{f_X(c_{Q(\bar v)})\lambda(Q(\bar v))}=g_{\bar v}(x),$$
which means that $\beta_{\bar v}$ goes to 1 or, equivalently, that $r_{\bar v}$ goes to 0.
\end{Rem}

\section{Numerical illustration}\label{ksjbksjcxksj}



\begin{figure}[h]
\begin{minipage}[b]{1\linewidth}
\begin{center}
\includegraphics[width=6.25cm, height = 4.5cm]{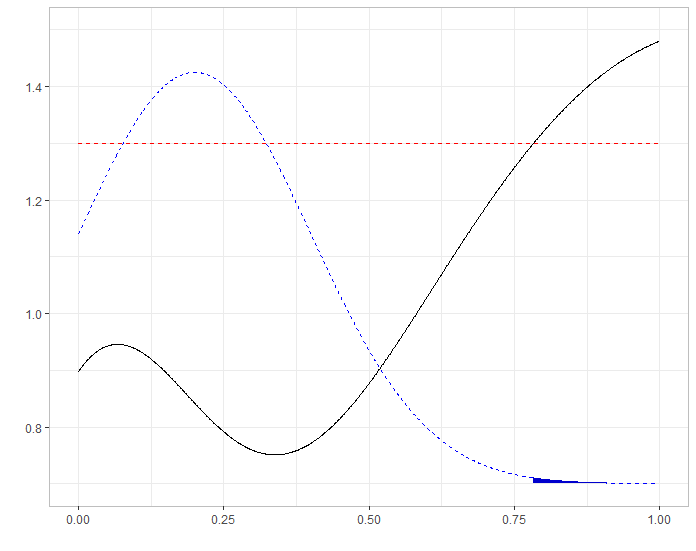}
\includegraphics[width=6.25cm, height = 4.5cm]{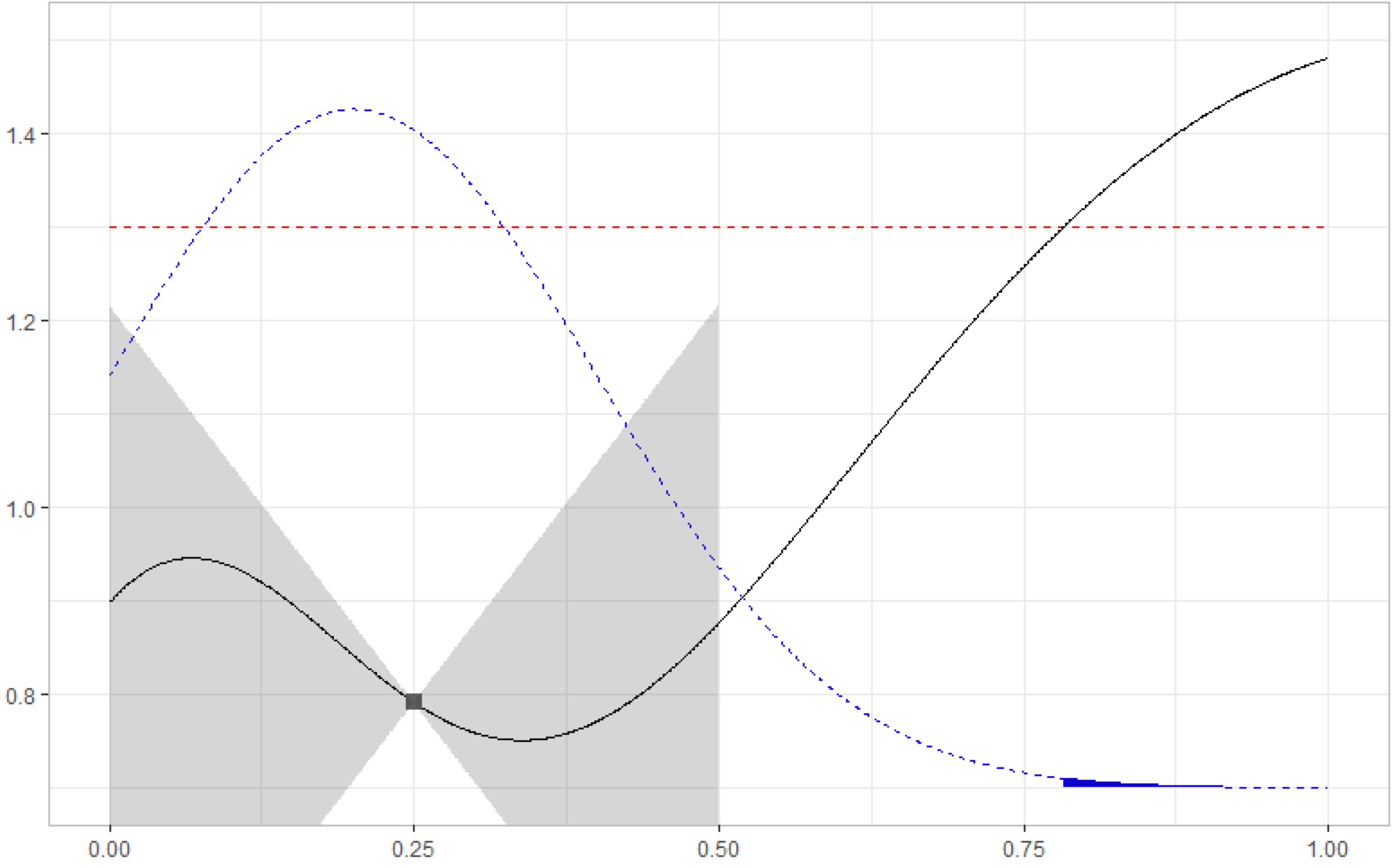}\\
  \vspace{0pt} \includegraphics[width=6.25cm, height = 4.5cm]{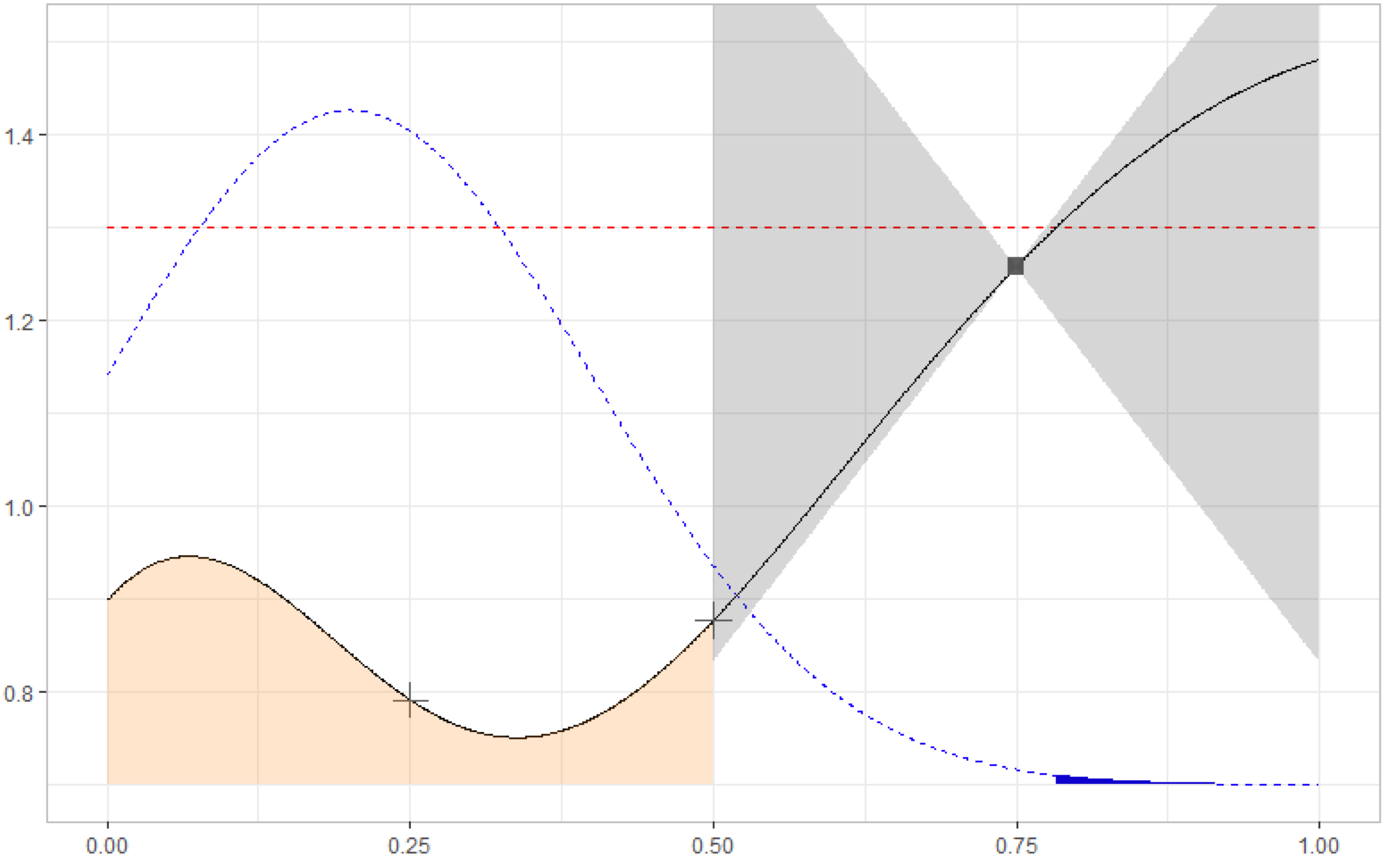} 
  \vspace{0pt} \includegraphics[width=6.25cm, height = 4.5cm]{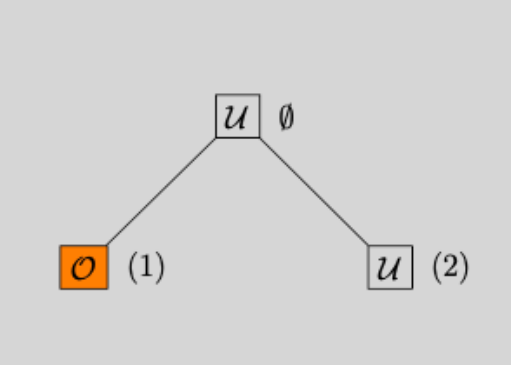}	
\end{center}
\end{minipage}
\caption{Representation of the function $g$ (black), the pdf $f_X$ (blue), the threshold $T$ (red), the probability $p$ (blue region), and illustration of the first step of the algorithm.}
\label{ljkcnlakenclkae}
\end{figure}

To illustrate our algorithm, we consider a toy example which is just a variant of the one proposed in Section 5.1 of \cite{MR2909621}. For all $x\in[0,1]$, we set
$$g(x) = (0.8x - 0.3) + \exp\left(-11.534x^{1.95}\right)+\exp\left(-2(x-0.9)^2\right),$$
which is $L$-Lipschitz with $L = \sup_{x\in[0,1]}|g'(x)|\approx1.61$. The law of $X$ is the restriction of a Gaussian distribution $\mathcal{N}(1/5,1/25)$ to $[0,1]$, i.e.,
$$f_X(x)\propto\exp\left\{-\frac{25}{2}\left(x-\frac{1}{5}\right)^2\right\}\mathbf{1}_{[0,1]}(x).$$
Finally, we take $T = 1.3$, so that a standard numerical integration gives $p\approx2.08\times 10^{-3}$. This is illustrated on Figure \ref{ljkcnlakenclkae}, together with the first step of the algorithm. Recall that the evaluation of $g$ at point $x=1/2$ is useless. Indeed, since $0<p<1$, the interval $\Omega=[0,1]$ is necessarily classified as uncertain (i.e., $\mathcal{U}$). Therefore, the first step consists in computing $g(1/4)$ and $g(3/4)$, which correspond respectively to vertices $(1)$ and $(2)$ of the tree. From this figure, it is easy to see that $(1)$ is classified as out (i.e., $\mathcal{O}$) while $(2)$ is classified as uncertain (i.e., $\mathcal{U}$). Therefore, there is no need to further investigate the interval $[0,1/2]$.\medskip

\begin{figure}[h]
\begin{minipage}[b]{1\linewidth}
\begin{center}
\includegraphics[width=6.25cm, height = 4.5cm]{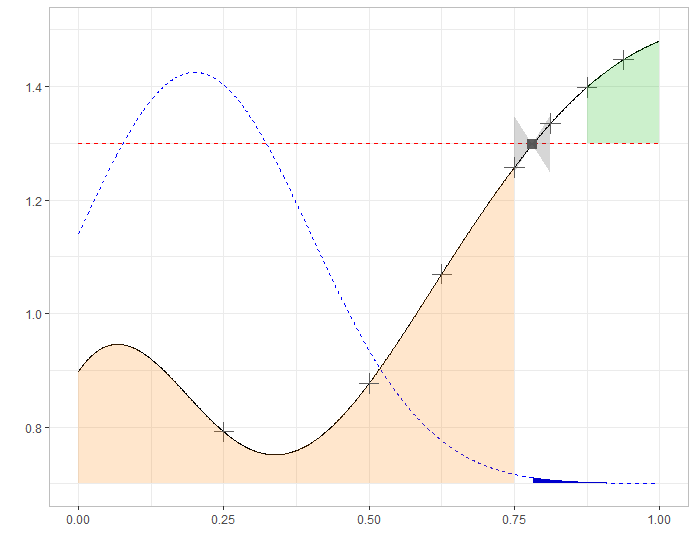}
\includegraphics[width=6.25cm, height = 4.5cm]{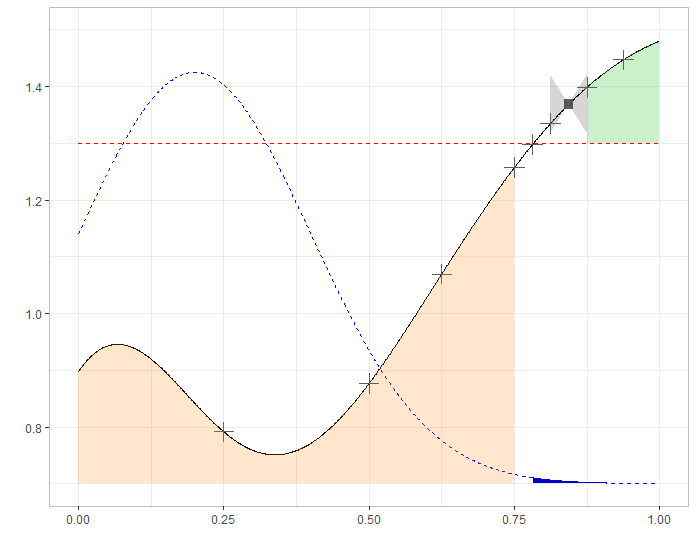}\\
\vspace{0pt} \includegraphics[width=6.25cm, height = 4.5cm]{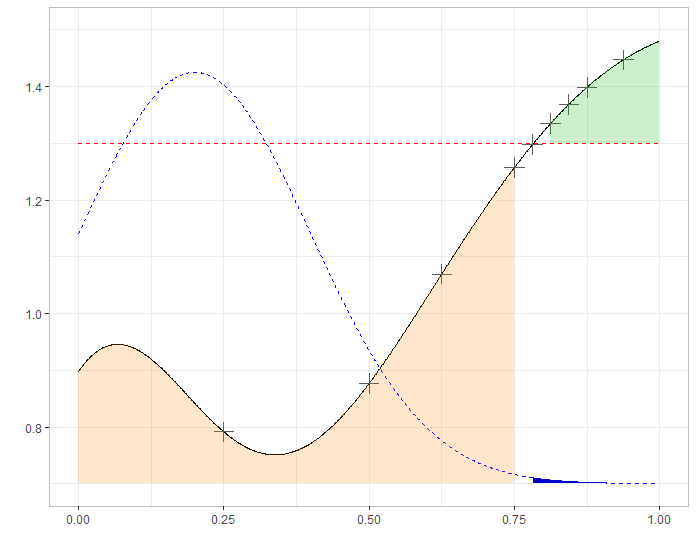} 
\vspace{0pt} \includegraphics[width=6.25cm, height = 4.5cm]{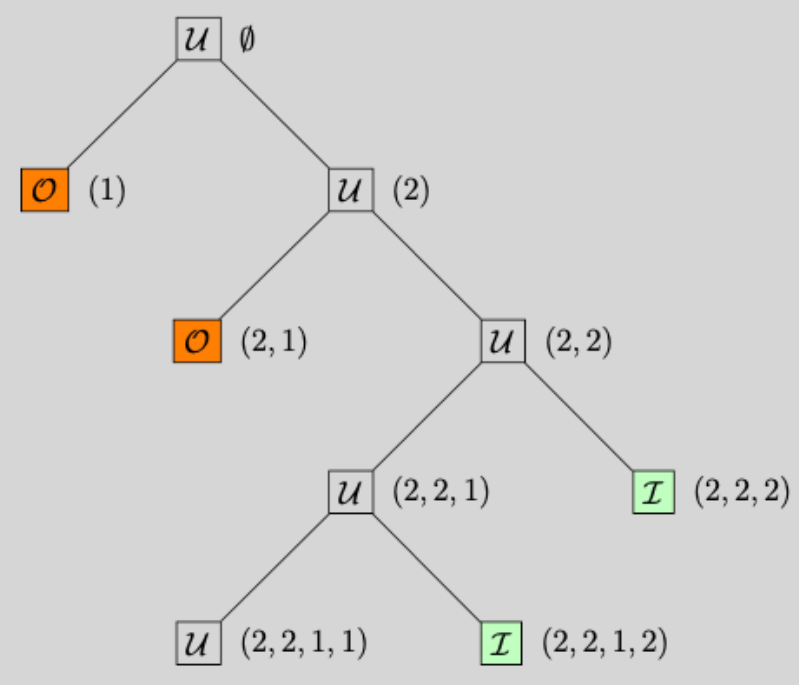}
\end{center}
\end{minipage}
\caption{Step 4 of the algorithm.}
\label{alechazjhlen}
\end{figure}


Figure \ref{alechazjhlen} represents step 4 of the algorithm, which consists in evaluating $g$ at points $x=25/32$ (i.e., vertex $(2,2,1,1)$) and $x=27/32$ (i.e., vertex $(2,2,1,2)$). These evaluations lead to classify the interval $[24/32,26/32]$ as uncertain (i.e., $\mathcal{U}$) and the interval  $[26/32,28/32]$ as included in the failure domain (i.e., $\mathcal{I}$). At this  point, the deterministic lower and upper bounds for $p$ are thus 
$$p^-(4)=\P(X\in[13/16,1])\approx 1.3\times 10^{-3}\leq p,$$
and
$$p\leq p^+(4)=\P(X\in[12/16,1])\approx 2.5\times 10^{-3},$$
and the approximation error is simply 
$$p^+(4)-p^-(4)=\P(X\in[12/16,13/16])\approx 2.2\times 10^{-3}.$$
Unsurprisingly, one may notice that the upper bound given by Lemma \ref{prop:error} is very pessimistic. Indeed, since $d=1$ we know that $C\geq 1$ (see Section \ref{kdjcnkajsc}) and this upper bound can be minorized as follows:
$$CK 2^{-k}\geq \frac{2^{-4}}{\displaystyle\int_0^1 \exp\left\{-\frac{25}{2}\left(x-\frac{1}{5}\right)^2\right\}dx}\approx 0.148\gg 2.2\times 10^{-3}.$$

\begin{figure}
\begin{center}
\begin{minipage}[b]{1\linewidth} 
\includegraphics[width=6.8cm, height = 5.5cm]{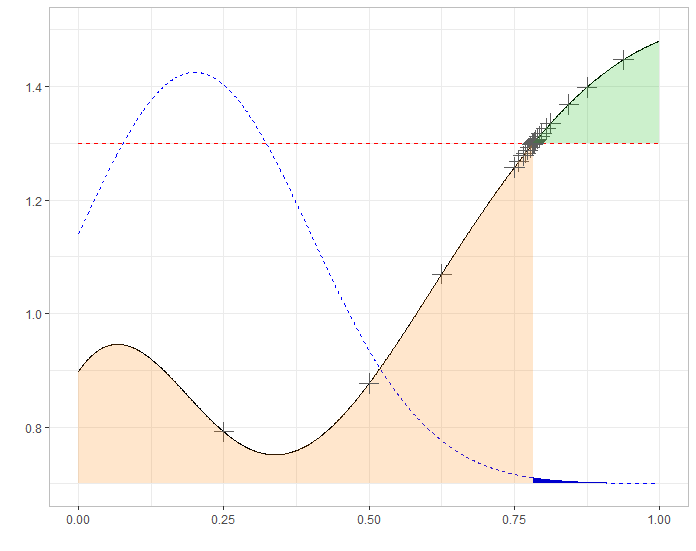}
\includegraphics[width=6.8cm, height =5.5cm]{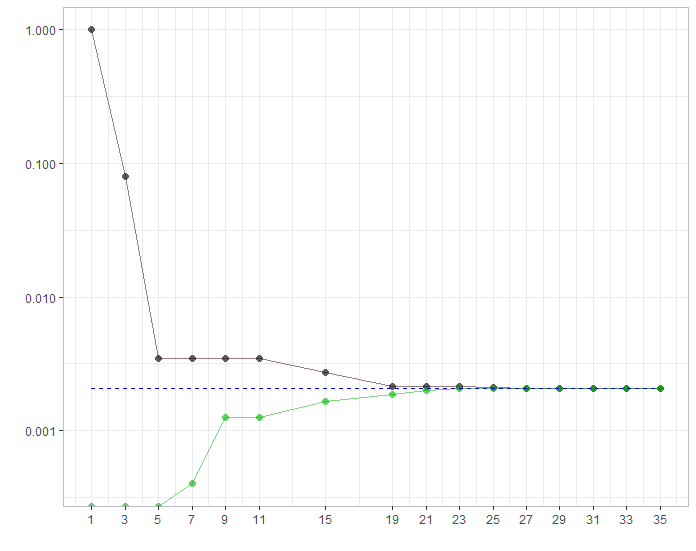}
\end{minipage}
\caption{Convergence of $p_n^-$ and $p_n^+$.}
\label{laclaclak}
\end{center}
\end{figure}

On this toy example, since the law of $X$ is simply the restriction of a Gaussian distribution, it is easy to have a very precise numerical approximation of $\P(X\in Q)$ for any dyadic interval $Q$ and, in turn, for the lower and upper bounds at each step of the algorithm. In other words, we can easily compute the (deterministic) approximation error. The evolution of these bounds $p_n^-$ and $p_n^+$ as the number of evaluation points grows is given in Figure \ref{laclaclak} for a total budget of $n=35$ calls to $g$.\medskip

However, in practice, this is usually not possible, hence the use of MCMC techniques as explained in Section \ref{zkjcajclacx}. On our example, up to a normalizing constant, the pdf $f_X$ is defined by
$$f_X(x)\propto\exp\left\{-\frac{25}{2}\left(x-\frac{1}{5}\right)^2\right\}\mathbf{1}_{[0,1]}(x).$$
Thus, for any couple of points $(x, x')$, the Metropolis ratio $f_X(x')/f_X(x)$ that appears in \eqref{jcbkabckabc} is very easy to compute. We have applied this idea for a sample size $N=10^5$ with $t=25$ Markov transitions for each probability estimation. In this respect, Figure \ref{ecaecaec} shows that when $N$ is much larger than the approximation error, then the latter is much larger than the estimation error. In order to illustrate Remark \ref{aiclazcalzd}, the asymptotic confidence intervals are also given.

\begin{figure}[h]
\begin{center}
\begin{minipage}[b]{1\linewidth} 
\includegraphics[width=6.8cm, height = 5.5cm]{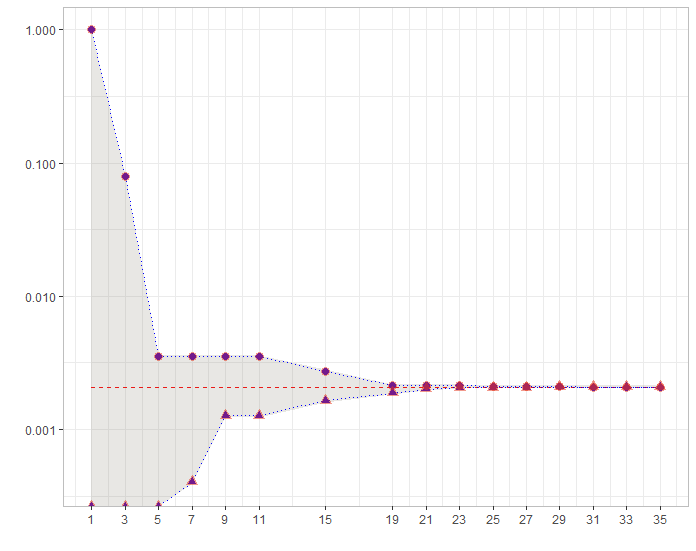}
\includegraphics[width=6.8cm, height =5.5cm]{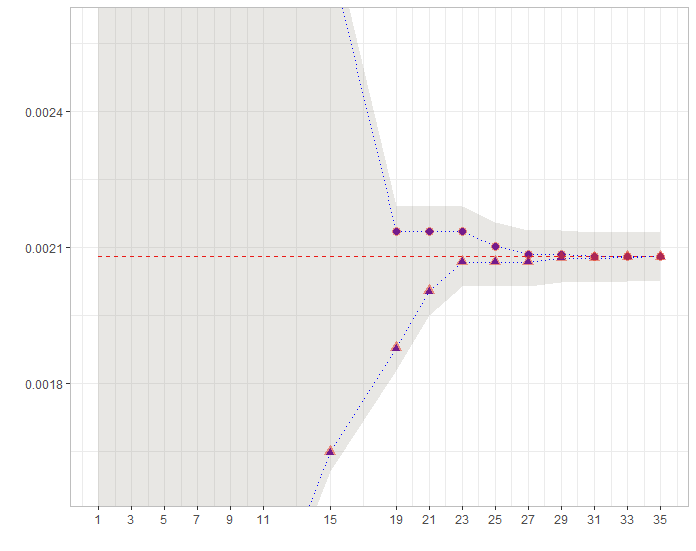}
\end{minipage}
\caption{Estimators $\hat p_{n,N}^-$ and $\hat p_{n,N}^+$ of $p_{n}^-$ and $p_{n}^+$, together with asymptotic confidence intervals.}
\label{ecaecaec}
\end{center}
\end{figure}

\section{Optimality}\label{dcljanlcna}
We have established in Theorem \ref{zdjcnlzdc} that after $n$ evaluations of the function $g$, 
the approximation error of our algorithm is of polynomial order $n^{-\frac{1}{d-1}}$ when $d\geq 2$, and of exponential order $2^{-\beta n}$ when $d=1$. The aim of this section is to show that these bounds are optimal, meaning that they cannot be improved by any other algorithm under the sole general assumptions that we have
made on the function $g$.

\subsection{The case $d\geq 2$} 

When $d\geq 2$, we consider the following particular case:
\begin{itemize}
 \item The random variable $X$ is uniformly distributed on $\Omega=[0,1]^d$;
 \item The function $g$ is defined by $g(x)=-x^1$ for $x=(x^1,\ldots,x^d)\in \Omega$;
 \item The threshold $T$ is equal to $0$. 
\end{itemize}
Thus, in this setting, the failure probability $p:=\dP(g(X)>0)$ is equal to $0$. Clearly the function $g$ satisfies the Lipschitz Assumption \ref{kdjcnkajsc}  with $L=1$ and the level set Assumption \ref{hyp:level-set} with $M=1$.\medskip 

Let us fix an integer $n=2^{j(d-1)-1}$ for some $j\in\dN^\star$ and $n$ arbitrary points $x_1,\ldots,x_n$ in $\Omega$.   
In the sequel, we construct a function $\tilde g$ on $\Omega$ such that 
\begin{itemize}
\item $g(x_i)=\tilde g(x_i)$ for $1\leq i\leq n$;
\item $\tilde p:=\dP(\tilde g(X)>0)\geq c n^{-\frac{1}{d-1}}$;  
\item $\tilde g$ satisfies Assumptions 2 and 3
with $L$ and $M$ independent of $n$.
\end{itemize}
The first fact ensures that any algorithm based on the points ${(x_i)}_{1\leq i\leq n}$ 
leads to the same estimation for $p$ and $\tilde p$. The second one ensures that $\tilde p-p$ 
is (at least) of order $n^{-\frac{1}{d-1}}$.\medskip

First, let us define the face
\[
\mathcal{C}:=\{x\in \Omega\,:\, g(x)=0\}=\left\{x\in\Omega\,:\, x^{1}=0\right\}. 
\]
Consider the set 
\[
\mathcal{D}^\star_j=\{Q\in\mathcal{D}_j\,:\, \mathrm{dist}(Q,\mathcal{C})=0\}
\]
of dyadic cubes with side length $2^{-j}$ which intersect $\mathcal{C}$ and the set 
\[
\tilde{\mathcal{D}}_j=\{Q\in\mathcal{D}^\star_j\,:\, \ x_i\notin Q, \; i=1,\dots,n\}
\]
of dyadic cubes that intersect $\mathcal{C}$ and do not contain any point~$x_i$. 
Since the cardinal of $\mathcal{D}^\star_j$ is equal to $2^{j(d-1)}=2n$, 
the cardinal of $\tilde{\mathcal{D}}_j$ is at least $n$.\medskip 

Second, for any cube $Q$ and any $x\in\Omega$, let us introduce the piecewise affine function 
\[
h_Q(x)= \mathrm{dist}(x,Q^c)=\inf_{y\in Q^c}\NRM{x-y}_\infty
\]
where $Q^c:=\Omega\setminus Q$. The function $h_Q$ is
thus supported on $Q$ and it is $1$-Lipschitz
for the $\ell^\infty$ norm. \medskip

Finally, consider the function $\tilde g$ defined 
as follows on $\Omega$: 
\[
\tilde g=g+2 \sum_{Q\in \tilde{\mathcal{D}}_j}h_Q. 
\]
By construction, the functions $g$ and $\tilde g$ coincide on the cubes that 
do not belong to $\tilde{\mathcal{D}}_j$. In particular, $g(x_i)=\tilde g(x_i)$ for any $1\leq i\leq n$. \medskip

Additionally, since $\sum_{Q\in \tilde{\mathcal{D}}_j}h_Q$ is $1$-Lipschitz, the function $\tilde g$ is $3$-Lipschitz, and therefore
Assumption \ref{kdjcnkajsc} holds with $L=3$.\medskip
 
For any $Q\in \tilde{\mathcal{D}}_j$, if $x_Q$ is the center
of $Q$ one has 
$$
\tilde g(x_Q)=g(x_Q)+2h_Q(x_Q)=-2^{-j-1}+2^{-j}=2^{-j-1}.
$$
Therefore $\tilde g(x)>0$ for any $x\in Q$ such that
$\|x-x_Q\|_\infty\leq \frac {2^{-j-1}}{3}$. As a consequence, since $X$ has a uniform distribution on $\Omega$, the failure probability associated to $\tilde g$ satisfies
$$
\dP(\tilde g(X)>0) \geq n 3^{-d}2^{-dj}
=c n^{-\frac{1}{d-1}},  
$$
where $c=3^{-d}2^{-\frac{d}{d-1}}$.
 \medskip

Finally, let us prove the validity of the level set Assumption \ref{hyp:level-set} for the function $\tilde g$. Just like $g$, the absolute 
value of $\tilde g$ is smaller than $2^{-j}$ on 
the cubes $Q\in \mathcal{D}^\star_j$ and larger elsewhere.
Therefore, when $\delta\geq 2^{-j}$, it is readily seen that
\[
\lambda\PAR{\BRA{x\in \Omega \,:\,|\tilde g(x)|\leq \delta}}\leq \delta.
\]
For the values $\delta\leq 2^{-j}$, we know that 
$\BRA{x\in \Omega \,:\,|\tilde g(x)|\leq \delta}$ is contained in
the union of the cubes $Q\in \mathcal{D}^\star_j$. If $Q\notin  \tilde{\mathcal{D}}_j$, then
\[
\lambda\PAR{\BRA{x\in Q \,:\,|\tilde g(x)|\leq \delta}}\leq \delta 2^{-j(d-1)}.
\]
The cubes $Q\in \tilde{\mathcal{D}}_j$ are treated by 
noticing that on such a cube, the function $\tilde g(x)=-x^1+2h_Q(x)$ is a rescaled version of the function 
$g^*(x)=-x^1+2h_\Omega(x)$ defined on $\Omega$. The gradient
of this function is piecewise constant with $\|\nabla g^*(x)\|_1\geq 1$
and therefore $|\nabla g^*(x)|\geq \frac 1 {\sqrt d}$ almost
everywhere on $\Omega$. In addition $g^*$ vanishes
on a polyhedral shaped set $S$ of $(d-1)$-dimensional measure $1<H<\infty$ since in particular $g^*(x)=0$ if $x^1=0$. Using the coarea formula \eqref{coarea}, this yields 
\[
\lambda\PAR{\BRA{x\in \Omega \,:\,|g^*(x)|\leq \delta}}\leq 2\sqrt{d}H\delta,
\]
for $\delta>0$ small enough, and therefore 
\[
\lambda\PAR{\BRA{x\in \Omega \,:\,|g^*(x)|\leq \delta}}\leq B\delta,
\]
for all value of $\delta>0$ up to possibly taking a constant $B$ larger than $2\sqrt{d}H$.
By rescaling
\[
\lambda\PAR{\BRA{x\in Q \,:\,|\tilde g(x)|\leq \delta}}\leq B\delta 2^{-j(d-1)}.
\]
for all $\delta\leq 2^{-j}$. Summing on all $Q\in \mathcal{D}^\star_j$, since $H>1$ and  $|\mathcal{D}^\star_j|=2n$, we find that
\[
\lambda\PAR{\BRA{x\in \Omega \,:\,|\tilde g(x)|\leq \delta}}\leq B\delta.
\]
This shows that Assumption \ref{hyp:level-set} holds with $M=B$ independent of $n$.\medskip

This proves the optimality of the approximation error rate of our algorithm.

\subsection{The case $d=1$}

The idea is the same as for the case $d\geq 2$. More precisely, we consider the following setting:
\begin{itemize}
 \item The random variable $X$ is uniformly distributed on $\Omega=[0,1]$;
 \item The function $g$ is defined by $g(x)=-x$;
 \item The threshold $T$ is equal to $0$. 
\end{itemize}
As in the previous subsection, the failure probability $p:=\dP(g(X)>0)$ is thus equal to $0$, and
the function $g$ satisfies Assumption \ref{kdjcnkajsc}  with $L=1$ and Assumption \ref{hyp:level-set} with $M=1$.\medskip 
\medskip 

Let us fix an integer $n\in\dN^\star$ and $n$ points $x_1,\ldots,x_n$ in $\Omega$.   
As before, the idea is to construct a function $\tilde g$ on $\Omega$ such that 
\begin{itemize}
\item $g(x_i)=\tilde g(x_i)$ for $1\leq i\leq n$;
\item $\tilde p:=\dP(\tilde g(X)>0)\geq c2^{-n}$.  
\item $\tilde g$ satisfies Assumptions 2 and 3
with $L$ and $M$ independent of $n$.
\end{itemize}

First, we define $I_{n+1}:=[0,2^{-n}]$ and, for $1\leq j\leq n$, $I_j:=[2^{-j},2^{-(j-1)}]$. To mimic the previous notation, this set of $(n+1)$ intervals is denoted $\mathcal{D}^\star_n$ and, accordingly,
\[
\tilde{\mathcal{D}}_n=\{I\in\mathcal{D}^\star_n\,:\, \forall i=1,\ldots,n,\ x_i\notin I\}
\]
stands for the set of intervals that do not contain any point~$x_i$. 
Since the cardinal of $\mathcal{D}^\star_n$ is equal to $(n+1)$, 
the cardinal of $\tilde{\mathcal{D}}_n$ is at least equal to $1$.\medskip 

Second, for any interval $I$ and any $x\in\Omega$, we consider the $1$-Lipschitz function 
\[
h_I(x)= \mathrm{dist}(x,I^c)=\inf_{y\in I^c}|x-y|.
\]
Finally, we pick one interval $J\in \tilde{\mathcal{D}}_n$
and define the function $\tilde g$ defined 
as follows
\[
\tilde g=g+4h_J.
\]
As before, the functions $g$ and $\tilde g$ coincide on $\Omega\setminus J$. In particular, $g(x_i)=\tilde g(x_i)$ for any $1\leq i\leq n$. \medskip

Additionally, the function $\tilde g$ is $5$-Lipschitz, and therefore Assumption \ref{kdjcnkajsc} holds with $L=5$. Since $\tilde g$ vanishes
at $x=0$ and (at most) at two other points inside $J$ where its gradient is larger than $3$, it is also easily seen that Assumption \ref{hyp:level-set} holds with $M=7/3$.
\medskip
  
If $x_J$ denotes the center of $J$, then one has
$$
\tilde g(x_J)=g(x_J)+4h_J(x_J)=-\frac 3 4  2^{-(j-1)}+4\times2^{-(j+1)}
=2^{-(j+1)}>0,
$$
in the case $J=I_j=[2^{-j},2^{-(j-1)}]$, $1\leq j\leq n$, and 
$$
\tilde g(x_J)=g(x_J)+4h_J(x_J)=-\frac 1 2 2^{-n}+4\times2^{-(n+1)}
=3\times 2^{-(n+1)}>0,
$$
in the case $J=I_{n+1}=[0,2^{-n}]$. Since $\tilde g$
has Lipschitz constant $5$, it follows that
$\BRA{x\in Q\,:\, \tilde g(x)>0}$ always contains an interval
of length larger than $\frac 1 5 2^{-n}$. As a consequence, since $X$ has a uniform distribution on $\Omega$, the failure probability associated to $\tilde g$ is such that 
$$
\dP(\tilde g(X)>0)\geq  c 2^{-n},
$$
with $c=\frac 1 5$.\medskip

This proves the optimality of the approximation error rate of our algorithm.

\section{Proof of Theorem  \ref{th:esti-p}}
\label{qlsjcnlnc}

Consistency and unbiasedness are clear by Remark \ref{lazclncskal}. The asymptotic normality is a consequence of the 
delta method. Remember that for $N\geq 1$ and $u,v\in\calT$, we denote
\[
C_N^v:=\sum_{i=1}^N \un_\BRA{X^{\bar v}_i\in Q(v)},
\quad 
q_N(v):=\frac{C^v_N}{N}
\quad\text{and}\quad 
p_N(u):=\prod_{v\in a(u)}q_N(v).
\]
First of all, let us recall the (classical) multidimensional CLT. 

\begin{lem}[Multidimensional CLT]
For all $w\in \Lambda$,  
\[
q_N(w)=\frac{C^w_N}{N}\xrightarrow[N\to\infty]{a.s.} q(w).
\]
Let us denote by $\mathbf{C}_N$ the random vector~${(C^w_N)}_{w\in \Lambda}$ and 
$\bq$ the vector~${(q(w))}_{w\in \Lambda}$. We have 
 \[
\sqrt{N}\SBRA{\frac{\mathbf{C}_N}{N}-\bq}
\xrightarrow[N\to\infty]{\mathcal{D}}\calN(0,\Gamma),
\]
where the covariance matrix $\Gamma$ is given by 
\[
\Gamma(v,w)=
\begin{cases}
 q(v)(1-q(v)) &\text{if }v=w,\\
 -q(v)q(w)&\text{if }v\neq w\text{ and }\bar v= \bar w,\\ 
 0&\text{otherwise.}
\end{cases}
\]
\end{lem}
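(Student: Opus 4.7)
The plan is to reduce the statement to a collection of independent multinomial CLTs, one per internal vertex of $\Lambda$, and then assemble them into a single block-diagonal multivariate Gaussian.

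First, I would organize the indexing by parents. For any internal vertex $w$ of $\Lambda$, set $c(w)$ for its children and fix the sample $(X_i^w)_{i\geq 1}$ provided by Assumption~\ref{ass:perfect}. For each $i$, the indicator vector ${\bigl(\mathbf{1}_{X_i^w\in Q(v)}\bigr)}_{v\in c(w)}$ is a categorical random vector with parameters ${(q(v))}_{v\in c(w)}$, since the cubes $Q(v)$ for $v\in c(w)$ form a partition of $Q(w)$ and $X_i^w$ has distribution $\mathcal{L}(X\vert X\in Q(w))$. Summing over $i=1,\ldots,N$, the vector ${\bigl(C_N^v\bigr)}_{v\in c(w)}$ is thus multinomial with parameters $N$ and ${(q(v))}_{v\in c(w)}$. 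In particular each $C_N^v$ is a sum of $N$ i.i.d.\ Bernoulli$(q(v))$ variables, and the strong law of large numbers yields $q_N(w)\to q(w)$ almost surely for every $w\in\Lambda$.

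Next I would handle the CLT block by block. For a single internal vertex $w$, the classical multivariate CLT applied to the i.i.d.\ categorical increments yields
\[
\sqrt{N}\,\Bigl[\bigl(q_N(v)\bigr)_{v\in c(w)}-\bigl(q(v)\bigr)_{v\in c(w)}\Bigr]\xrightarrow[N\to\infty]{\mathcal{D}}\calN\bigl(0,\Gamma_w\bigr),
\]
where $\Gamma_w$ is the usual multinomial covariance: $\Gamma_w(v,v)=q(v)(1-q(v))$ and $\Gamma_w(v,v')=-q(v)q(v')$ for $v\neq v'$ in $c(w)$. Crucially, by Assumption~\ref{ass:perfect} the families $(X_i^w)_{i\geq 1}$ are mutually independent across different $w$, hence the multinomial blocks ${\bigl(C_N^v\bigr)}_{v\in c(w)}$ for distinct internal vertices $w$ are independent. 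The joint limit is therefore a centred Gaussian whose covariance matrix is block diagonal in the parent index.

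Finally I would read off the entries of the global covariance matrix $\Gamma$ on $\Lambda\times\Lambda$. Two indices $v,w\in\Lambda$ are in the same block if and only if $\bar v=\bar w$, so the block-diagonal structure gives exactly the three cases of the statement: $\Gamma(v,v)=q(v)(1-q(v))$, $\Gamma(v,w)=-q(v)q(w)$ when $v\neq w$ and $\bar v=\bar w$, and $\Gamma(v,w)=0$ otherwise. No step here is really delicate; the main point that must be handled cleanly is the independence across parents, which is exactly what Assumption~\ref{ass:perfect} is designed to ensure, and the bookkeeping of the multinomial covariance when $v$ and $w$ are siblings.
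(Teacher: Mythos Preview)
Your proof sketch is correct. The paper itself does not prove this lemma: it introduces it with the phrase ``let us recall the (classical) multidimensional CLT'' and states it without argument, treating it as standard background. Your reduction to independent multinomial blocks indexed by internal vertices, followed by the block-diagonal assembly via Assumption~\ref{ass:perfect}, is exactly the natural justification and matches the covariance structure claimed.
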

From the latter we immediately deduce that
\[
p_N(\calS)=\sum_{u\in \calS}p_N(u)=\sum_{u\in \calS}\prod_{v\leq u}q_N(v)\xrightarrow[N\to\infty]{a.s.}p(\calS).
\]
Next, we may rewrite $p(\calS)$ as a function of $\bq={(q(v))}_{v\in \Lambda}$ as follows:
\[
p(\calS)=F(\bq):=\sum_{u\in \calS} \prod_{v\leq u}q(v). 
\]
The partial derivative of $F$ with respect to $q(v)$, denoted $\partial_vF$, is given by 
\[
\partial_v F(\bq)=\sum_{\substack{u\in\calS\\ v\leq u}} 
\prod_{\substack{w\in a(u)\\w\neq v}} q(w)
=\sum_{\substack{u\in\calS\\ v\leq u}}\frac{p(u)}{q(v)}. 
\]
If $\nabla F=(\partial_v F)_{v\in \Lambda}$ is seen a row vector, the delta method ensures that 
\[
\sqrt{N} (F(\bq_N)-F(\bq)) \xrightarrow[N\to\infty]{}\calN(0,\sigma^2), 
\]
where
\begin{align*}
\sigma^2&=(\nabla F) \Gamma (\nabla F)^T
=\sum_{v,w\in\Lambda} (\partial_vF)\Gamma(v,w)(\partial_wF)\\
&=\sum_{v\in\Lambda}\Gamma(v,v)(\partial_vF)^2
+\sum_{\substack{v\neq w\in\Lambda \\\bar v=\bar w}} \Gamma(v,w)(\partial_vF)(\partial_wF)\\
&=\sum_{v\in\Lambda}q(v)(1-q(v))\SBRA{\sum_{\substack{u\in \calS\\v\leq u}}\frac{p(u)}{q(v)}}^2
-\sum_{\substack{v\neq w\in \Lambda\\ \bar v=\bar w}}q(v)q(w)
\SBRA{\sum_{\substack{u\in \calS\\v\leq u}}\frac{p(u)}{q(v)}}
\SBRA{\sum_{\substack{u'\in \calS\\w\leq u'}}\frac{p(u')}{q(w)}}\\
&=\sum_{v\in\Lambda}\frac{1-q(v)}{q(v)}\SBRA{\sum_{\substack{u\in \calS\\v\leq u}}p(u)}^2
-\sum_{\substack{v\neq w\in \Lambda\\ \bar v=\bar w}}
\SBRA{\sum_{\substack{u\in \calS\\v\leq u}}p(u)}
\SBRA{\sum_{\substack{u'\in \calS\\w\leq u'}}p(u')}.
\end{align*}
Let us define 
\[
A:=\sum_{v\in\Lambda}\frac{1-q(v)}{q(v)}\SBRA{\sum_{\substack{u\in \calS\\v\leq u}}p(u)}^2
\quad\text{and}\quad 
B:=\sum_{\substack{v\neq w\in \Lambda\\ \bar v=\bar w}}
\SBRA{\sum_{\substack{u\in \calS\\v\leq u}}p(u)}
\SBRA{\sum_{\substack{u'\in \calS\\w\leq u'}}p(u')}.
\]
We have, since $(v\leq u) \Leftrightarrow v\in a(u)$, 
\begin{align*}
A&=\sum_{v\in\Lambda}\sum_{\substack{u\in \calS\\v\leq u}}p(u)^2\frac{1-q(v)}{q(v)}
+\sum_{v\in\Lambda}\sum_{\substack{u,u'\in \calS\\v\leq u\\ v\leq u'}}p(u)p(u')\frac{1-q(v)}{q(v)}\\
&=\sum_{u\in\calS}\sum_{v\in a(u)}p(u)^2\frac{1-q(v)}{q(v)}
+\sum_{\substack{u\neq u'\in \calS}}\sum_{v\in a(u)\cap a(u')}p(u)p(u')\frac{1-q(v)}{q(v)}.
\end{align*}
Similarly, we get 
\[
B=\sum_{\substack{v\neq w\in \Lambda\\ \bar v=\bar w}}
\sum_{\substack{u,u'\in \calS\\v\leq u\\w\leq u'}}p(u)p(u')
=\sum_{\substack{u\neq u'\in \calS}}p(u)p(u').
\]
Since $a(u)\cap a(u')=a(u\wedge u')$, this finally yields the claimed expression for $\sigma^2$.

\bibliographystyle{plain}
\bibliography{biblio-bcgm}

\def\cprime{$'$}
\begin{thebibliography}{10}

\bibitem{Au2001263}
S.K. Au and J.L. Beck.
\newblock Estimation of small failure probabilities in high dimensions by
  subset simulation.
\newblock {\em Probabilistic Engineering Mechanics}, 16(4):263--277, 2001.

\bibitem{au:901}
S.K. Au and J.L. Beck.
\newblock Subset simulation and its application to seismic risk based on
  dynamic analysis.
\newblock {\em Journal of Engineering Mechanics}, 129(8):901--917, 2003.

\bibitem{MR2909621}
J.~Bect, D.~Ginsbourger, L.~Li, V.~Picheny, and E.~Vazquez.
\newblock Sequential design of computer experiments for the estimation of a
  probability of failure.
\newblock {\em Stat. Comput.}, 22(3):773--793, 2012.

\bibitem{Bect_2017}
J.~Bect, L.~Li, and E.~Vazquez.
\newblock Bayesian subset simulation.
\newblock {\em SIAM/ASA Journal on Uncertainty Quantification}, 5(1):762–786,
  Jan 2017.

\bibitem{bucklew04}
J.A. Bucklew.
\newblock {\em Introduction to rare event simulation}.
\newblock Springer Series in Statistics. Springer-Verlag, New York, 2004.

\bibitem{cdfg}
F.~C{\'e}rou, P.~Del~Moral, T.~Furon, and A.~Guyader.
\newblock Sequential {M}onte {C}arlo for rare event estimation.
\newblock {\em Stat. Comput.}, 22(3):795--808, 2012.

\bibitem{cg4}
F.~C\'{e}rou and A.~Guyader.
\newblock Fluctuation analysis of adaptive multilevel splitting.
\newblock {\em Ann. Appl. Probab.}, 26(6):3319--3380, 2016.

\bibitem{cgr}
F.~C\'{e}rou, A.~Guyader, and M.~Rousset.
\newblock Adaptive multilevel splitting: Historical perspective and recent
  results.
\newblock {\em Chaos: An Interdisciplinary Journal of Nonlinear Science},
  29(4):043108, 2019.

\bibitem{MR3197124}
A.~Cohen, R.~Devore, G.~Petrova, and P.~Wojtaszczyk.
\newblock Finding the minimum of a function.
\newblock {\em Methods Appl. Anal.}, 20(4):365--381, 2013.

\bibitem{Livre_opti}
O.~Ditlevsen and H.O. Madsen.
\newblock {\em Structural reliability methods}, volume 178.
\newblock Wiley New York, 1996.

\bibitem{evans}
L.C. Evans and R.F. Gariepy.
\newblock {\em Measure theory and fine properties of functions}.
\newblock Studies in Advanced Mathematics. CRC Press, Boca Raton, FL, 1992.

\bibitem{ghm}
A.~Guyader, N.~Hengartner, and E.~Matzner-L{\o}ber.
\newblock Simulation and estimation of extreme quantiles and extreme
  probabilities.
\newblock {\em Applied Mathematics and Optimization}, 64:171--196, 2011.

\bibitem{harbitz}
A.~Harbitz.
\newblock An efficient sampling method for probability of failure calculation.
\newblock {\em Structural Safety}, 3(2):109--115, 1986.

\bibitem{kahn}
H.~Kahn and T.E. Harris.
\newblock Estimation of particle transmission by random sampling.
\newblock {\em National Bureau of Standards Appl. Math. Series}, 12:27--30,
  1951.

\bibitem{Neveu}
J.~Neveu.
\newblock Arbres et processus de {G}alton-{W}atson.
\newblock {\em Annales de l'{I}.{H}.{P}}, 22(2):199--207, 1986.

\bibitem{Rasmussen06}
C.E. Rasmussen and C.K.I. Williams.
\newblock {\em Gaussian processes for machine learning}.
\newblock Adaptive {C}omputation and {M}achine {L}earning. MIT Press,
  Cambridge, MA, 2006.

\bibitem{MR3350081}
R.~Sch\"{o}bi, B.~Sudret, and J.~Wiart.
\newblock Polynomial-chaos-based {K}riging.
\newblock {\em Int. J. Uncertain. Quantif.}, 5(2):171--193, 2015.

\bibitem{MR1329166}
L.~Tierney.
\newblock Markov chains for exploring posterior distributions.
\newblock {\em Ann. Statist.}, 22(4):1701--1762, 1994.
\newblock With discussion and a rejoinder by the author.

\end{thebibliography}
\end{document}